\newtheorem{theorem}{\bf{Theorem}}[section] 
\newtheorem{lemma}[theorem]{\bf{Lemma}}     
\newtheorem{corollary}[theorem]{\bf{Corollary}}
\newtheorem{proposition}[theorem]{\bf{Proposition}}
\newtheorem{definition}[theorem]{\bf{Definition}}
\newtheorem{remark}[theorem]{\bf{Remark}}
\title[COMPLETELY SEMI-$\varphi$-MAPS]
 {COMPLETELY SEMI-$\varphi$-MAPS} 
\subjclass[2010]{Primary: 46L08, Secondary: 46L07.}
\keywords{Hilbert $C^*$-modules, Stinespring's theorem, completely
positive maps}
\author[M. B. Asadi]{Mohammad B. Asadi}
\address{School of Mathematics, Statistics and Computer Science,
 College of Science, University of Tehran, Tehran,
  Iran, and \\
   School of Mathematics, Institute for Research in Fundamental Sciences (IPM),
P.O. Box: 19395-5746, Tehran, Iran}
\email{mb.asadi@khayam.ut.ac.ir}
\author[R. Behmani]{Reza Behmani}
\address{\noindent Department of Mathematics, Kharazmi University, 50, Taleghani Ave.,15618, Tehran Iran}
\email{reza.behmani@gmail.com}
\author[A. R. Medghalchi]{Ali R. Medghalchi}
\address{\noindent Department of Mathematics, Kharazmi University, 50, Taleghani Ave.,15618, Tehran IRAN.}
\email{a\_medghalchi@khu.ac.ir}%
\author[H. Nikpey]{Hamed Nikpey}
\address{Department of Mathematics, Shahid Rajaei Teacher Training University, Tehran 16785-136, Iran}
\email{hamednikpey@gmail.com}
\begin{document}
\maketitle

 \begin{abstract}
We introduce completely semi-$\varphi$-maps on Hilbert $C^*$-modules as a generalization of $\varphi$-maps.
   This class of maps provides examples of CP-extendable maps which are not CP-H-extendable, in Skeide-Sumesh's sense.
  Using the CP-extendability of completely semi-$\varphi$-maps, we give a representation
theorem,  similar to Stinespring's representation theorem, for
this class of maps which can be considered as strengthened and
generalized form of Asadi's and Bhat-Ramesh-Sumesh's analogues
   of Stinespring representation theorem for $\varphi$-maps.
     We also define an order relation on the set of all completely semi-$\varphi$-maps and establish
      a Radon-Nikodym type theorem for this class of maps in terms of their representations.
         \end{abstract}

\section{INTRODUCTION}
Asadi in \cite{Asadi} and Bhat-Ramesh-Sumesh in \cite{BRS} gave
a representation theorem for a class of maps on Hilbert $C^*$-modules,
 as a generalization of Stinespring's representation theorem for completely positive maps
on $C^*$-algebras.
  Michael Skeide \cite{skeide} achieved a generalization of Bhat-Ramesh-Sumesh's theorem
  for $\varphi$-maps between Hilbert $C^*$-modules in term of $C^*$-correspondences.

L. Aramba$\check{s}$i$\acute{c}$ \cite{Arambašic}, extended the
representation theory of $C^*$-algebras to Hilbert $C^*$-modules.
     She showed that the set of all representations of a Hilbert $C^*$-module $\mathcal{E}$ are in one to one
      correspondences with the set of all representations of its linking $C^*$-algebra $\mathcal{L}(\mathcal{E})$.
Skeide and  Sumesh \cite{Skeide-Sumesh} introduced CP-extendable
maps between Hilbert $C^*$-modules as maps that can be extended to
a completely positive map  acting block-wise between the
associated (extended or reduced) linking algebras. They
characterized $\varphi$-maps in term of those CP-extendable maps
where the 11-corner of the extension can be chosen to be a
homomorphism, the CP-H-extendable.
                Besides of the studying CPH-semigroups in \cite{Skeide-Sumesh}, they presented a factorization of strictly CP-extendable maps, too.
          Combining Stinespring's representation theorem \cite{St,Paulsen} and
           a result of L. Aramba$\check{s}$i$\acute{c}$ \cite[Proposition 3.1]{Arambašic} imply
            that an operator valued map on a Hilbert $C^*$-module is dilatable if and only if
            it is CP-extendable.

           The above results and facts motivate us to take a closer look at dilatable maps and provide a class of CP-extendable maps which
            are not CP-H-extendable.
            We introduce the class of completely semi-$\varphi$-maps as a generalization of $\varphi$-maps.
             We concentrate on operator valued completely semi-$\varphi$-maps
             and strengthen Skeide-Sumesh' theorem \cite[Theorem 1.3]{Skeide-Sumesh} in operator valued case by showing that completely
             semi-$\varphi$-maps are exactly those CP-extendable maps which its $11$-corner of the extension can be chosen to be a
             unital completely positive map. Using this we strengthen the main result of \cite{Asadi} for completely semi-$\varphi$-maps in Section 3.

                Moreover, we use the minimality conditions  that was introduced in \cite{BRS} for dilation pairs of $\varphi$-maps
                 to introduce the minimal dilation pairs of completely semi-$\varphi$-maps, and  show that two minimal dilation pairs for
                  a given completely semi-$\varphi$-maps on a Hilbert $C^*$-module $\mathcal{E}$ implement unitarily equivalent $*$-representations
                   on the linking $C^*$-algebra $\mathcal{L}_1(\mathcal{E}).$
                 Furthermore, we give two characterizations of completely semi-$\varphi$-maps in terms of
                 their CP-extension and minimal dilation pairs which helps us to construct examples of completely semi-$\varphi$-maps.

In Section 5, we define an order relation on the set of all
completely semi-$\varphi$-maps and provide a Radon-Nikodym type
theorem  for completely semi-$\varphi$-maps in terms of their
dilation pairs. This Radon-Nikodym type theorem for completely
semi-$\varphi$-maps maps strengthen the Joita's result on
$\varphi$-maps on Hilbert $C^*$-modules (c.f. \cite[Theorem
2.15]{J}).

\section{PRELIMINARIES}

 For a right Hilbert $C^*$-module $\mathcal{E}$ over  a
unital $C^*$-algebra $\mathcal A$, the linking $C^*$-algebra of
$\mathcal{E}$ is denoted by $\mathcal{L}(\mathcal{E})$ and defined
as $\mathcal{L}(\mathcal{E}):=\{\begin{bmatrix}
u & x \\
y^* & a
\end{bmatrix} | \ a\in\mathcal{A}, \ u\in\mathbb{K}(\mathcal{E}), \ x, y\in\mathcal{E}\},$ where $\mathbb{K}(\mathcal{E})$ is the set of compact operators on
$\mathcal{E}$.
We consider the unitization of $\mathcal{L}(\mathcal{E})$ as
$\mathcal{L}_{1}(\mathcal{E}):=\{\begin{bmatrix}
u & x \\
y^* & a
\end{bmatrix} | \ a\in\mathcal{A}, \ u \in \mathbb{K}_{1}(\mathcal{E}), \ x,y\in\mathcal{E}\},$
where
$\mathbb{K}_{1}(\mathcal{E})=\mathbb{K}(\mathcal{E})+\mathbb{C}I_{\mathcal{E}},$ $(I_\mathcal{E}$ is
the identity operator on $\mathcal{E}$ and when there is no confusion, it is denoted by $I$,
 for convenience we denote $\lambda I$ by $\lambda$ for every complex scaler $\lambda).$
The smallest operator subsystem of
$\mathcal{L}_{1}(\mathcal{E})$ which contains $\mathcal{A}$ and
$\mathcal{E}$ is denoted by $S_{\mathcal{A}}(\mathcal{E})$ and is
defined as follow
$$S_{\mathcal{A}}(\mathcal{E}):=\begin{bmatrix}
\mathbb{C}I & \mathcal{E} \\
\mathcal{E}^* & \mathcal{A}
\end{bmatrix}=\{\begin{bmatrix}
\lambda & x \\
y^* & a
\end{bmatrix} | \ a\in \mathcal{A}, \ \lambda\in\mathbb{C}, \ x,y\in\mathcal{E}\}.$$

For every natural number $n,$ $\mathbb{M}_n(\mathcal{E})$ with its
natural vector space structures and the following module action and
inner product is a Hilbert $C^*$-module over the $C^*$-algebra
$\mathbb{M}_n(\mathcal{A})$,

(i) $(x_{ij}).(a_{ij}):=(\sum_{k=1}^{n}x_{ik}a_{kj})$ for every
$(a_{ij})\in\mathbb{M}_n(\mathcal{A})$ and
$(x_{ij})\in\mathbb{M}_n(\mathcal{E}),$

(ii) $\langle(x_{ij}),(y_{ij})\rangle:=(\sum_{k=1}^{n}\langle
x_{ki},y_{kj}\rangle)$ for every
$(x_{ij}),(y_{ij})\in\mathbb{M}_n(\mathcal{E}).$

For Hilbert spaces $H,K$, and arbitrary given maps
$\rho:\mathcal{A}\rightarrow\mathcal{B}(H)$,
$\sigma:\mathbb{K}_{1}(\mathcal{E})\rightarrow\mathcal{B}(K)$ and
 $\Psi:\mathcal{E}\rightarrow\mathcal{B}(H,K)$,
 the map $\begin{bmatrix}
u & x \\
y^* & a
\end{bmatrix}\mapsto \begin{bmatrix}
\sigma(u) & \Psi(x) \\
\Psi(y)^* & \rho(a)
\end{bmatrix}$ from $\mathcal{L}_{1}(\mathcal{E})$ into $\mathcal{B}(K \oplus H)$
is denoted by $\begin{bmatrix}
\sigma & \Psi \\
\Psi^* & \rho
\end{bmatrix}$.
Similarly, if $T \in \mathcal{B}(K)$, the
map $\begin{bmatrix}
\lambda & x \\
y^* & a
\end{bmatrix}\mapsto \begin{bmatrix}
\lambda T & \Psi(x) \\
\Psi(y)^* & \rho(a)
\end{bmatrix}$ from $S_{\mathcal{A}}(\mathcal{E})$ into $\mathcal{B}(K \oplus H)$
 is denoted by $\begin{bmatrix}
T & \Psi \\
\Psi^* & \rho
\end{bmatrix}$.

Assume that $\mathcal{E,F}$ are Hilbert $C^*$-modules over $C^*$-algebras $\mathcal{A,B}$ respectively,
$\varphi:\mathcal{A}\rightarrow\mathcal{B}$ is a completely
positive map and $\Phi:\mathcal{E}\rightarrow\mathcal{F}$ is
a linear map, we say

(1) $\Phi$ is a \textit{$\varphi$-map}, if
$\langle\Phi(x),\Phi(y)\rangle=\varphi(\langle x, y\rangle)$, for all
$x,y \in\mathcal{E}$.

(2) $\Phi$ is a \textit{semi-$\varphi$-map}, if
$\langle\Phi(x),\Phi(x)\rangle\leq \varphi(\langle x,x\rangle)$, for all
$x\in\mathcal{E}.$

(3) $\Phi$ is a \textit{completely semi-$\varphi$-map}, if
$\langle\Phi_n(x),\Phi_n(x)\rangle\leq \varphi_n(\langle x,x\rangle)$, for all
$x\in \mathbb{M}_n(\mathcal{E})$ and $n \in \mathbb{N}$.

 If $\mathcal B=\mathcal{B}(H)$ and $\mathcal{F}=\mathcal{B}(H,K)$
for some Hilbert spaces ${H,K},$

(4) $\Phi$ is \textit{non-degenerate}, if
$[\Phi(\mathcal{E})H]=K$.

(5) $\Phi$ is a \textit{$\varphi$-representation or
representation}, if $\Phi$ is a $\varphi$-map and $\varphi$ is a
$*$-representation.

(6) $\Phi$ is \textit{dilatable}, if there exists a representation
$\Psi:\mathcal{E}\to\mathcal{B}(H',K')$ and bounded operators
$V:H\to H'$ and $W:K\to K'$ such that
$$\Phi(x)=W^*\Psi(x)V.$$


\begin{remark} \label{r2.1}
If $\rho:\mathcal{A}\to\mathcal{B}(H)$ is a $*$-representation and
$\Psi:\mathcal{E}\to\mathcal{B}(H,K)$ is a $\rho$-representation,
then there exists a $*$-representation
$\sigma:\mathbb{K}_{1}(\mathcal{E})\rightarrow\mathcal{B}(K)$ such
that for every
 $x,y\in\mathcal{E}$, $\sigma(x\otimes y)=\Phi(x)\Phi(y)^*.$
Consequently,
$\begin{bmatrix}
\sigma & \Psi \\
\Psi^* & \rho
\end{bmatrix}: \mathcal{L}_{1}(\mathcal{E}) \rightarrow \mathcal{B}(K \oplus H)$
is a representation of
$\mathcal{L}_{1}(\mathcal{E})$.
 Conversely, every representation of the linking algebra has this form \cite[Proposition 3.1]{Arambašic}.

 Note that we use a weaker definition for non-degenerate
  operator valued maps on Hilbert $C^*$-modules rather than
  L. Aramba$\check{s}$i$\acute{c}$'s definition \cite[Definition 3.2]{Arambašic}
  for non-degenerate representations on Hilbert $C^*$-modules.
   However, in the case of full Hilbert $C^*$-modules, nondegeneracy of
    the $\rho$-representation $\Psi$ implies that $[\Psi(\mathcal{E})^*K]=H$, and consequently the two definitions coincide
  and $\sigma,$ $\rho$ and also
$\begin{bmatrix}
\sigma & \Psi \\
\Psi^* & \rho
\end{bmatrix}$ are non-degenerate  if and only if $\Psi$ is non-degenerate by \cite[Lemma 3.4]{Arambašic}.
\end{remark}


\begin{remark}
Note that every $\varphi$-map is a completely semi-$\varphi$-map.
Also if we consider $\mathcal{A}$ as a Hilbert
$\mathcal{A}$-module, then every unital completely positive map
$\varphi:\mathcal{A}\rightarrow\mathcal{B}(H)$ is a completely
semi-$\varphi$-map on $\mathcal{A}$. In this case $\varphi$ is a
$\varphi$-map iff $\varphi$ is a $*$-representation. Thus, for
every non-multiplicative unital completely positive map $\varphi,$
there is a completely semi-$\varphi$-map which is not a
$\varphi$-map. At the end of section 4, we provide a
characterization of operator valued completely semi-$\varphi$-maps
which helps us to construct completely semi-$\varphi$-maps which
are not $\varphi$-map.
\end{remark}


\begin{proposition}
 Assume $\varphi:\mathcal{A}\rightarrow\mathcal{B}(H)$ is a pure, unital completely positive map which is not multiplicative.
   Then $\varphi$ is a completely semi-$\varphi$-map but it is not $\tau$-map for any  completely positive map $\tau:\mathcal{A}\rightarrow\mathcal{B}(H).$
\end{proposition}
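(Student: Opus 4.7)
The plan is to split the proposition into its two assertions and handle each separately. The completely semi-$\varphi$ part will follow directly from the Kadison--Schwarz inequality for unital completely positive maps, and the non-$\tau$-map part will be a short argument by contradiction that collapses any hypothetical $\tau$ to $\varphi$ itself and then forces multiplicativity.

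For the first assertion, I view $\mathcal{A}$ as a Hilbert $\mathcal{A}$-module with inner product $\langle a,b\rangle = a^*b$, so that $\varphi$ being a completely semi-$\varphi$-map amounts to the inequality $\varphi_n(x)^*\varphi_n(x)\leq\varphi_n(x^*x)$ for every $n\in\mathbb{N}$ and every $x\in\mathbb{M}_n(\mathcal{A})$. Since $\varphi$ is unital and completely positive, each amplification $\varphi_n$ is unital and completely positive as well, and the Kadison--Schwarz inequality then gives the required bound at every level. The remark preceding the proposition already records precisely this observation.

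For the second assertion, suppose toward a contradiction that $\varphi$ is a $\tau$-map for some completely positive $\tau\colon\mathcal{A}\to\mathcal{B}(H)$, so that $\varphi(a)^*\varphi(b)=\tau(a^*b)$ for all $a,b\in\mathcal{A}$. Setting $b=1$ and using $\varphi(1)=I$ yields $\varphi(a)^*=\tau(a^*)$; since both $\varphi$ and $\tau$ are $*$-preserving, this rearranges to $\tau=\varphi$ on all of $\mathcal{A}$. Substituting back and using $\varphi(a)^*=\varphi(a^*)$ gives $\varphi(a^*)\varphi(b)=\varphi(a^*b)$ for all $a,b\in\mathcal{A}$, i.e., $\varphi$ is a $*$-homomorphism, contradicting the non-multiplicativity hypothesis.

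I do not anticipate any real obstacle; the only delicate point is keeping track of the Hilbert $C^*$-module inner product conventions when applying the $\tau$-map identity. Purity of $\varphi$ is not actually used in either step, and appears to be included mostly to pin down a natural family of concrete examples distinct from the trivial non-multiplicative ones used elsewhere in the paper.
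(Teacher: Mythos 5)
Your proof is correct, but your argument for the second assertion is genuinely different from (and more elementary than) the paper's. The paper takes the minimal Stinespring dilation $(\rho,K,V)$ of $\varphi$, invokes Arveson's result that purity of $\varphi$ makes $\rho$ irreducible, uses the completely semi-$\varphi$ inequality to get $\tau\leq\varphi$ in the CP order, and then applies Arveson's Radon--Nikodym theorem to write $\tau(\cdot)=V^*T\rho(\cdot)V$ with $T\in\rho(\mathcal{A})'=\mathbb{C}I_K$; this forces $\tau=t\varphi$, unitality gives $t=1$, and multiplicativity follows, a contradiction. You instead evaluate the $\tau$-map identity $\varphi(a)^*\varphi(b)=\tau(a^*b)$ at $b=1$, which (using $\varphi(1)=I$ and the fact that completely positive maps are $*$-preserving) immediately yields $\tau=\varphi$ and hence multiplicativity. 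Your route is shorter, avoids the dilation and Radon--Nikodym machinery, and — as you correctly observe — never uses purity, so it actually proves the stronger statement that \emph{no} unital, non-multiplicative completely positive map is a $\tau$-map for any completely positive $\tau$. What the paper's heavier argument buys is different: it relies only on the comparison $\tau\leq\varphi$ rather than on the exact evaluation at the unit (so it previews the order-theoretic and Radon--Nikodym-type techniques of Section 5), whereas your trick depends on the module being $\mathcal{A}$ itself with $\varphi$ unital, which is exactly the setting here. Your treatment of the first assertion (Kadison--Schwarz for each amplification $\varphi_n$) is the standard justification of the paper's preceding remark and matches what the paper implicitly uses.
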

\begin{proof}
Let $(\rho,K,V)$ be the minimal Stinespring dilation triple for $\varphi.$ Then $\rho:\mathcal{A}\rightarrow\mathcal{B}(K)$ is an irreducible representation for $\mathcal{A}$ \cite[Corollary 1.4.3]{Arv}. If there exists a completely positive map $\tau:\mathcal{A}\rightarrow\mathcal{B}(H)$ such that $\varphi(x)^*\varphi(y)=\tau(a^*b)$ satisfies for every $x,y\in\mathcal{E},$ then $\tau\leq\varphi,$ since $\varphi$ is a completely semi-$\varphi$-map.
 Thus there exists a positive contraction $T\in\rho(\mathcal{A})'$ such that $\tau(a)=V^*T\rho(a)V$ for every $a\in\mathcal{A}$ \cite[Theorem 1.4.2]{Arv}, but $\rho$ is an irreducible representation, thus $\rho(\mathcal{A})'=\mathbb{C}.I_{K},$ so, $T=t.I_{K}$ for some scaler $t\in [0.1].$
Therefore $\varphi(a)^*\varphi(b)=t\varphi(a^*b)$ for every
$a,b\in\mathcal{E},$ which implies $t=1$ and therefore $\varphi$ is
a $\varphi$-map, thus $\varphi$ is multiplicative, which is a
contradiction.
  \end{proof}

\section{COMPLETELY SEMI-$\varphi$-MAPS, CP-EXTENDABILITY AND DILATABILITY}

In the following, we show that each completely semi-$\varphi$-map
$\Phi$ on a Hilbert $C^*$-module implements a completely positive
map on the linking $C^*$-algebra.
 In fact, we show that $\varphi$
and $\Phi$ are corners of a completely positive map on
$\mathcal{L}_{1}(\mathcal{E}).$
 The following lemma can be
obtained by \cite[Lemma 3.1]{Paulsen}.

\begin{lemma}\label{l}
Let $\mathcal{A}$ be a unital $C^*$-algebra and $\mathcal{E}$  a
right Hilbert module over $\mathcal{A}.$
 Then for every
$x\in\mathcal{E}$ and $a\in\mathcal{A}$, $\begin{bmatrix}
1 & x \\
x^* & a
\end{bmatrix}$ is positive if and only if $\langle x,x\rangle_\mathcal{A}\leq a.$

\end{lemma}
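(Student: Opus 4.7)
The plan is to reduce this to the operator-space version of Paulsen's Lemma 3.1 by representing $\mathcal{L}_{1}(\mathcal{E})$ faithfully on a Hilbert space. First I would choose a faithful nondegenerate $*$-representation $\pi$ of $\mathcal{L}_{1}(\mathcal{E})$ on some Hilbert space. By Arambasic's result cited in Remark \ref{r2.1}, every such $\pi$ has the block form $\pi=\begin{bmatrix}\sigma & \Psi \\ \Psi^{*} & \rho\end{bmatrix}$ acting on $K\oplus H$, where $\rho:\mathcal{A}\to\mathcal{B}(H)$ is a $*$-representation, $\sigma:\mathbb{K}_{1}(\mathcal{E})\to\mathcal{B}(K)$ is a $*$-representation, and $\Psi:\mathcal{E}\to\mathcal{B}(H,K)$ is a $\rho$-representation. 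Since $\pi$ is faithful, one checks easily that $\rho$ is faithful (if $\rho(a)=0$ then $\pi\begin{bmatrix}0 & 0 \\ 0 & a\end{bmatrix}=0$, hence $a=0$), and since $\pi$ is unital, $\sigma(I_{\mathcal{E}})=I_{K}$.

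Next, I apply this to the matrix in question. Under $\pi$, the element $\begin{bmatrix}1 & x \\ x^{*} & a\end{bmatrix}\in \mathcal{L}_{1}(\mathcal{E})$ is mapped to the operator matrix $\begin{bmatrix}I_{K} & \Psi(x) \\ \Psi(x)^{*} & \rho(a)\end{bmatrix}\in \mathcal{B}(K\oplus H)$. Since $\pi$ is a faithful $*$-homomorphism, the original element is positive in $\mathcal{L}_{1}(\mathcal{E})$ if and only if this operator matrix is positive. By the classical Paulsen lemma (\cite[Lemma 3.1]{Paulsen}) for operators on a Hilbert space, this matrix is positive if and only if $\Psi(x)^{*}\Psi(x)\leq \rho(a)$.

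Finally, because $\Psi$ is a $\rho$-representation, $\Psi(x)^{*}\Psi(x)=\rho(\langle x,x\rangle)$, so the previous inequality becomes $\rho(\langle x,x\rangle)\leq \rho(a)$. Faithfulness of $\rho$ then promotes this to $\langle x,x\rangle \leq a$ in $\mathcal{A}$, completing the equivalence.

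The main obstacle I anticipate is bookkeeping about the unitization: one must make sure the identity $1=I_{\mathcal{E}}$ in the $(1,1)$-corner is actually sent to $I_{K}$, which requires the chosen representation of the linking algebra to be unital (equivalently, nondegenerate on $\mathbb{K}_{1}(\mathcal{E})$). Everything else is a direct translation between module-level positivity in $\mathcal{L}_{1}(\mathcal{E})$ and operator-level positivity via a faithful representation.
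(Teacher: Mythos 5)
Your proof is correct, but it takes a genuinely different and noticeably heavier route than the paper, which offers no written argument at all, only the remark that the statement ``can be obtained by'' the two-by-two positivity criterion of \cite[Lemma 3.1]{Paulsen}. The intended argument is purely algebraic inside the unital $C^*$-algebra $\mathcal{L}_{1}(\mathcal{E})$, where $x^{*}x=\langle x,x\rangle$: if $\langle x,x\rangle\leq a$ then $\begin{bmatrix} 1 & x\\ x^{*} & a\end{bmatrix}=\begin{bmatrix} 1 & x\\ 0 & 0\end{bmatrix}^{*}\begin{bmatrix} 1 & x\\ 0 & 0\end{bmatrix}+\begin{bmatrix} 0 & 0\\ 0 & a-\langle x,x\rangle\end{bmatrix}\geq 0$, and conversely, conjugating the positive element by $c=\begin{bmatrix} 0 & -x\\ 0 & 1\end{bmatrix}$ gives $c^{*}\begin{bmatrix} 1 & x\\ x^{*} & a\end{bmatrix}c=\begin{bmatrix} 0 & 0\\ 0 & a-\langle x,x\rangle\end{bmatrix}\geq 0$, hence $\langle x,x\rangle\leq a$ because the corner embedding of $\mathcal{A}$ reflects positivity; no representation theory is needed. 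Your version instead realizes $\mathcal{L}_{1}(\mathcal{E})$ faithfully and nondegenerately on a Hilbert space, invokes Aramba\v{s}i\'{c}'s block decomposition recalled in Remark \ref{r2.1} to write the representation as $\begin{bmatrix}\sigma & \Psi\\ \Psi^{*} & \rho\end{bmatrix}$ with $\sigma$ unital and $\rho$ faithful, and then uses the operator criterion $\begin{bmatrix} I & A\\ A^{*} & B\end{bmatrix}\geq 0$ iff $A^{*}A\leq B$ together with $\Psi(x)^{*}\Psi(x)=\rho(\langle x,x\rangle)$; every step checks out (injective $*$-homomorphisms preserve and reflect positivity, and nondegeneracy of a representation of the unital algebra forces $\sigma(I_{\mathcal{E}})=I_{K}$, as you note), and there is no circularity since that decomposition does not depend on this lemma. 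What your route buys is a clean reduction to the familiar Hilbert-space fact; what it costs is the structure theory of representations of the linking algebra, which is disproportionate machinery for this elementary statement. One small caution: Paulsen's Lemma 3.1 as literally stated concerns a single element of a unital $C^*$-algebra with square corners, so for your rectangular operator matrix you should either cite the standard operator-theoretic two-by-two criterion or verify it directly by the quadratic-form estimate; this is routine, but it should be said.
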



\begin{lemma}\label{l3.1}
Let $\varphi:\mathcal{A}\rightarrow\mathcal{B}(H_{1})$ be a
completely positive map and
$\Phi:\mathcal{E}\rightarrow\mathcal{B}(H_{1},H_{2})$ be a linear
map.
 Then $\Phi$ is a completely semi-$\varphi$-map if and only if
$\begin{bmatrix}
id & \Phi \\
\Phi^* & \varphi
\end{bmatrix}:S_\mathcal{A}(\mathcal{E})\rightarrow\mathcal{B}(H_2\oplus H_1)$
 is a completely positive map
\end{lemma}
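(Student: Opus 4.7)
The plan is to derive both directions through the $2\times 2$ positivity test of Lemma \ref{l}, amplified from $\mathcal{A}$-modules to $\mathbb{M}_n(\mathcal{A})$-modules. I would begin by recording the canonical order isomorphism that identifies $\mathbb{M}_n(S_\mathcal{A}(\mathcal{E}))$ with $S_{\mathbb{M}_n(\mathcal{A})}(\mathbb{M}_n(\mathcal{E}))$ by shuffling the inner $2\times 2$ and outer $n\times n$ blocks. Applying Lemma \ref{l} to the right Hilbert $\mathbb{M}_n(\mathcal{A})$-module $\mathbb{M}_n(\mathcal{E})$ (whose unit is $I_n$) yields the amplified test: $\begin{bmatrix} I_n & X \\ X^* & A \end{bmatrix}$ is positive in $\mathbb{M}_n(S_\mathcal{A}(\mathcal{E}))$ iff $\langle X,X\rangle \leq A$ in $\mathbb{M}_n(\mathcal{A})$. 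The analogous scalar test on the target handles the operator side.

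For the ``if'' direction, assume the block map is completely positive. Given $X\in \mathbb{M}_n(\mathcal{E})$, the matrix $\begin{bmatrix} I_n & X \\ X^* & \langle X,X\rangle \end{bmatrix}$ is positive in $\mathbb{M}_n(S_\mathcal{A}(\mathcal{E}))$ by the amplified Lemma \ref{l}. The $n$-th amplification of $\begin{bmatrix} id & \Phi \\ \Phi^* & \varphi \end{bmatrix}$ sends it to $\begin{bmatrix} I & \Phi_n(X) \\ \Phi_n(X)^* & \varphi_n(\langle X,X\rangle) \end{bmatrix}$, a positive operator on $H_2^n\oplus H_1^n$. The standard $2\times 2$ positivity test for operators then yields $\langle \Phi_n(X),\Phi_n(X)\rangle \leq \varphi_n(\langle X,X\rangle)$, which is exactly the defining inequality for completely semi-$\varphi$-maps.

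For the ``only if'' direction, suppose $\Phi$ is a completely semi-$\varphi$-map and take a positive $P=\begin{bmatrix} \Lambda & X \\ X^* & A \end{bmatrix}$ in $\mathbb{M}_n(S_\mathcal{A}(\mathcal{E}))$. After replacing $(\Lambda,A)$ by $(\Lambda+\epsilon I_n, A+\epsilon I_n)$ and letting $\epsilon\to 0^+$ at the end, I may assume $\Lambda$ is strictly positive in $\mathbb{M}_n(\mathbb{C})$. Conjugating $P$ by $\mathrm{diag}(\Lambda^{-1/2},I_n)$ produces the positive element $\begin{bmatrix} I_n & \Lambda^{-1/2}X \\ X^*\Lambda^{-1/2} & A \end{bmatrix}$, so $\langle \Lambda^{-1/2}X, \Lambda^{-1/2}X\rangle \leq A$ by the amplified Lemma \ref{l}. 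Applying $\varphi_n$, invoking the completely semi-$\varphi$-map hypothesis, and using the linearity identity $\Phi_n(\Lambda^{-1/2}X) = (\Lambda^{-1/2}\otimes I_{H_2})\Phi_n(X)$, I obtain $\Phi_n(X)^*(\Lambda^{-1}\otimes I_{H_2})\Phi_n(X)\leq \varphi_n(A)$. Reversing the operator-side congruence yields positivity of $\begin{bmatrix} \Lambda\otimes I_{H_2} & \Phi_n(X) \\ \Phi_n(X)^* & \varphi_n(A) \end{bmatrix}$, which is precisely the image of $P$ under the amplified block map.

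The main obstacle I anticipate is the bookkeeping around matrix amplification: one must transport Lemma \ref{l} cleanly from $\mathcal{A}$ to $\mathbb{M}_n(\mathcal{A})$, reconcile the scalar-matrix action of $\Lambda$ on the module side with its amplification $\Lambda\otimes I_{H_2}$ on the operator side through $\Phi_n$, and execute the perturbation to invertible $\Lambda$ without disturbing positivity of the amplified image in the limit.
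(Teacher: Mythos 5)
Your proposal is correct and follows essentially the same route as the paper's proof: the canonical shuffle identifying $\mathbb{M}_n(S_\mathcal{A}(\mathcal{E}))$ with the corresponding operator system over $\mathbb{M}_n(\mathcal{E})$, the amplified version of Lemma \ref{l}, conjugation by $\mathrm{diag}(\Lambda^{-1/2},I_n)$ with a perturbation to make $\Lambda$ invertible, and the easy converse via the positive element $\begin{bmatrix} I_n & X \\ X^* & \langle X,X\rangle \end{bmatrix}$. The only differences are cosmetic (you spell out the $\epsilon\to 0^+$ limit and the identity $\Phi_n(\Lambda^{-1/2}X)=(\Lambda^{-1/2}\otimes I_{H_2})\Phi_n(X)$, which the paper leaves implicit).
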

\begin{proof}
Let $\Phi$ be a completely semi-$\varphi$-map and $\begin{bmatrix}
1 & x \\
x^* & a
\end{bmatrix} $ a positive element of $S_{\mathcal{A}}(\mathcal{E})$.
By above lemma, $\langle x, x\rangle_{\mathcal{A}}  \leq a$ and so
$\Phi(x)^*\Phi(x) \leq \varphi(\langle x, x\rangle_{\mathcal A})
\leq \varphi(a)$.
 Then, $\begin{bmatrix}
1 & \Phi(x) \\
\Phi(x)^* & \varphi(a)
\end{bmatrix} $ is a positive element of $\mathcal{B}(H_2 \oplus H_1)$
and hence  $\begin{bmatrix}
id & \Phi \\
\Phi^* & \varphi
\end{bmatrix}$ is a positive mapping.
 To show that $\begin{bmatrix}
id & \Phi \\
\Phi^* & \varphi
\end{bmatrix}$ is a completely positive map, let $(\begin{bmatrix}
\lambda_{i,j} & T_{i,j} \\
S_{i,j}^* & a_{i,j}
\end{bmatrix})_{i,j=1}^n$ be a positive element of $\mathbb{M}_{n}(S_{\mathcal{A}}(\mathcal{E}))$ for some $n \in \mathbb{N}$.
By a unitary equivalence we have
$$(\begin{bmatrix}
\lambda_{i,j} & T_{i,j} \\
S_{i,j}^* & a_{i,j}
\end{bmatrix})_{i,j=1}^n
\cong\begin{bmatrix}
(\lambda_{i,j})_{i,j}^n & (T_{i,j})_{i,j}^n \\
(S_{i,j}^*)_{i,j}^n & (a_{i,j})_{i,j}^n
\end{bmatrix}\in\mathbb{M}_{2}(\mathcal{L}_{1}(\mathbb{M}_{n}(\mathcal{E})))\hspace{1cm}(1).$$
Then $(\lambda_{i,j})_{i,j}^n$ and $(a_{i,j})_{i,j}^n$ are
positive matrices.
 First, we assume that
$\lambda:=(\lambda_{i,j})_{i,j}^n \in \mathbb{M}_n(\mathbb{C})$ is
an invertible matrix.
 Set $T=(T_{i,j})_{i,j}^n \in
\mathbb{M}_n(\mathcal{E})$ and $a=(a_{i,j})_{i,j}^n
\in\mathbb{M}_{n}(\mathcal A)$, then
$$\begin{bmatrix}
I_n & \lambda^{-\frac{1}{2}}T \\
 T^*\lambda^{-\frac{1}{2}}  & a
\end{bmatrix}
=
\begin{bmatrix}
 \lambda^{-\frac{1}{2}} &0 \\
0 & I_n
\end{bmatrix}
\begin{bmatrix}
\lambda & T\\
T^*& a
\end{bmatrix}
\begin{bmatrix}
 \lambda^{-\frac{1}{2}} &0 \\
0 & I_n
\end{bmatrix}$$
 is a positive element of $\mathbb{M}_{2}(\mathcal{L}_{1}(\mathbb{M}_{n}(\mathcal{E})))$.
Thus $\langle \lambda^{-\frac{1}{2}}T,\lambda^{{-\frac{1}{2}}}T
\rangle_{\mathbb{M}_n(\mathcal A)} \leq a$
 and hence
$$\Phi_n(\lambda^{-\frac{1}{2}}T)^*\Phi_n(\lambda^{-\frac{1}{2}}T)
\leq \varphi_{n}(\langle \lambda^{-\frac{1}{2}}T, \lambda^{-\frac{1}{2}} T
\rangle_{\mathbb{M}_n(\mathcal A)})
 \leq \varphi_n(a).$$
Then
$$\begin{bmatrix}
I_n & \lambda^{-\frac{1}{2}} \Phi_n(T)\\
\Phi_n( T)^*\lambda^{-\frac{1}{2}}& \varphi_n(a)
\end{bmatrix}
=\begin{bmatrix}
I_n & \Phi_n(\lambda^{-\frac{1}{2}}T)\\
\Phi_n(\lambda^{-\frac{1}{2}}T)^*& \varphi_n(a)
\end{bmatrix}$$
is positive.
 Therefore
$$(\begin{bmatrix}
\lambda_{i,j} & \Phi(T_{i,j}) \\
\Phi(T_{j,i})^* & \varphi(a_{i,j})
\end{bmatrix})_{i,j}^n
\cong
\begin{bmatrix}
\lambda &  \Phi_n(T)\\
\Phi_n( T)^* & \varphi_n(a)
\end{bmatrix}=
\begin{bmatrix}
 \lambda^{\frac{1}{2}} &0 \\
0 & I_n
\end{bmatrix}
\begin{bmatrix}
I_n & \lambda^{-\frac{1}{2}} \Phi_n(T)\\
\Phi_n( T)^*\lambda^{-\frac{1}{2}}& \varphi_n(a)
\end{bmatrix}
\begin{bmatrix}
 \lambda^{\frac{1}{2}} &0 \\
0 & I_n
\end{bmatrix}$$ is positive too.
This means that $\begin{bmatrix}
id & \Phi \\
\Phi^* & \varphi
\end{bmatrix}$ is a completely positive map.
In general case, if $\lambda$ is not invertible we can use $\lambda + r I_n$ for some $r >0$.

Conversely, assume $\begin{bmatrix}
id & \Phi \\
\Phi^* & \varphi
\end{bmatrix}$ is a completely positive map on $S_{\mathcal A}(\mathcal{E}).$
Since for every $x\in \mathbb{M}_{n}(\mathcal{E})$,
$\begin{bmatrix}
1 & x \\
 x^* & \langle x,x\rangle
\end{bmatrix}$ is a positive element of $S_{\mathbb{M}_n(\mathcal A)}(\mathbb{M}_n
(\mathcal{E}))$,
  $\begin{bmatrix}
1 & \Phi_{n}(x) \\
\Phi_{n}(x)^* & \varphi_{n}(\langle x,x\rangle)
\end{bmatrix}$ is positive. Therefore $\Phi$ is a completely semi-$\varphi$-map.
\end{proof}

\begin{theorem}\label{t3.2}
Let $\varphi:\mathcal{A}\rightarrow\mathcal{B}(H_{1})$ be a
completely positive map and
$\Phi:\mathcal{E}\rightarrow\mathcal{B}(H_{1},H_{2})$ a completely
semi-$\varphi$-map.
 Then there exists a unital completely positive map $\psi:\mathbb{K}_{1}(\mathcal{E})\rightarrow\mathcal{B}(H_{2})$ such that
$\begin{bmatrix}
 \psi& \Phi \\
\Phi^* & \varphi
\end{bmatrix}:\mathcal{L}_{1}(\mathcal{E}) \rightarrow
\mathcal{B}(H_{2} \oplus H_{1})$ is a completely positive map.
\end{theorem}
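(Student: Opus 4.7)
The plan is to apply Arveson's extension theorem to the cp map on $S_{\mathcal{A}}(\mathcal{E})$ provided by the preceding lemma, and then use a Stinespring-based argument to show that the extension automatically has the desired block-diagonal structure in the $(1,1)$-corner. By the preceding lemma, $\Theta_0 := \begin{bmatrix} id & \Phi \\ \Phi^* & \varphi \end{bmatrix}: S_{\mathcal{A}}(\mathcal{E}) \to \mathcal{B}(H_2 \oplus H_1)$ is completely positive, and Arveson's extension theorem (using injectivity of $\mathcal{B}(H_2 \oplus H_1)$) produces a cp extension $\Theta: \mathcal{L}_{1}(\mathcal{E}) \to \mathcal{B}(H_2 \oplus H_1)$. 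Writing $\iota_{11}: \mathbb{K}_{1}(\mathcal{E}) \hookrightarrow \mathcal{L}_{1}(\mathcal{E})$ for the corner embedding $u \mapsto \begin{bmatrix} u & 0 \\ 0 & 0 \end{bmatrix}$, the natural candidate is $\psi(u) := P_2 \, \Theta(\iota_{11}(u)) \, P_2^*$, where $P_2: H_2 \oplus H_1 \to H_2$ is the canonical projection.

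The core difficulty is that $\Theta(\iota_{11}(u))$ is not a priori block-diagonal with respect to $H_2 \oplus H_1$, so one must rule out off-diagonal spill on $\iota_{11}(\mathbb{K}_{1}(\mathcal{E}))$. Set $e := \iota_{11}(I_{\mathcal{E}}) \in S_{\mathcal{A}}(\mathcal{E})$; then $e$ is a projection in $\mathcal{L}_{1}(\mathcal{E})$ and $\Theta(e) = \Theta_0(e) = \begin{bmatrix} I_{H_2} & 0 \\ 0 & 0 \end{bmatrix} =: p$. Fix a unital Stinespring dilation $\Theta(\cdot) = V^* \pi(\cdot) V$ with $\pi: \mathcal{L}_{1}(\mathcal{E}) \to \mathcal{B}(K)$ a $*$-representation and $V: H_2 \oplus H_1 \to K$ bounded, so $V^*V = \Theta(1) = \begin{bmatrix} I_{H_2} & 0 \\ 0 & \varphi(1) \end{bmatrix}$. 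For $\xi \in H_2$ and $\eta \in H_1$, the computations
\[
\|\pi(1-e) V \xi\|^2 = \langle (V^*V - p)\xi, \xi \rangle = 0, \qquad \|\pi(e) V \eta\|^2 = \langle p\eta, \eta \rangle = 0
\]
show that $V(H_2) \subseteq \pi(e) K$ and $V(H_1) \subseteq \pi(1-e) K$. Since $e \iota_{11}(u) e = \iota_{11}(u)$ for every $u \in \mathbb{K}_{1}(\mathcal{E})$, the operator $\pi(\iota_{11}(u))$ preserves $\pi(e) K$ and annihilates $\pi(1-e) K$; hence $\Theta(\iota_{11}(u)) = V^* \pi(\iota_{11}(u)) V$ vanishes on $H_1$ and maps $H_2$ into $H_2$, so $\Theta(\iota_{11}(u)) = \begin{bmatrix} \psi(u) & 0 \\ 0 & 0 \end{bmatrix}$.

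With this block-diagonality, $\psi$ is completely positive as a compression of the cp map $\Theta \circ \iota_{11}$, and unital because $\psi(I_{\mathcal{E}}) = P_2 \, \Theta(e) \, P_2^* = I_{H_2}$. Linearity of $\Theta$, combined with the fact that it agrees with $\Theta_0$ on the $\mathcal{A}$-corner and on the off-diagonal $\mathcal{E}$-pieces, then yields
\[
\Theta\begin{bmatrix} u & x \\ y^* & a \end{bmatrix} = \begin{bmatrix} \psi(u) & \Phi(x) \\ \Phi(y)^* & \varphi(a) \end{bmatrix},
\]
which is the required form. The entire nontrivial content is the Stinespring-level computation that pins $\Theta(\iota_{11}(\mathbb{K}_{1}(\mathcal{E})))$ to the $H_2$-corner; everything else is bookkeeping.
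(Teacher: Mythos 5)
Your argument is correct, and its skeleton is the same as the paper's: use Lemma \ref{l3.1} to get the completely positive map $\begin{bmatrix} id & \Phi \\ \Phi^* & \varphi \end{bmatrix}$ on $S_{\mathcal{A}}(\mathcal{E})$, extend it to $\mathcal{L}_{1}(\mathcal{E})$ by Arveson's extension theorem, and then show the extension maps the corner $\begin{bmatrix} \mathbb{K}_{1}(\mathcal{E}) & 0 \\ 0 & 0 \end{bmatrix}$ into $\begin{bmatrix} \mathcal{B}(H_2) & 0 \\ 0 & 0 \end{bmatrix}$. Where you genuinely diverge is in how that corner-preservation is established. The paper first normalizes (``without loss of generality $\theta_0$ is unital,'' via the rescaling $\Phi\mapsto r\Phi$, $\varphi\mapsto r^{2}\varphi$) and then cites the multiplicative-domain property of unital completely positive maps \cite[Corollary 5.2.2]{Effros} at the projection $p=\begin{bmatrix} I_{\mathcal{E}} & 0 \\ 0 & 0\end{bmatrix}$, using that $\theta(p)$ is a projection to get $\theta(pXp)=\theta(p)\theta(X)\theta(p)$. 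You instead prove exactly the special case needed by hand: from a Stinespring dilation $\Theta=V^{*}\pi(\cdot)V$ and the two norm computations you show $V(H_2)\subseteq\pi(e)K$ and $V(H_1)\subseteq\pi(1-e)K$, and since $\pi(\iota_{11}(u))=\pi(e)\pi(\iota_{11}(u))\pi(e)$ this forces $\Theta(\iota_{11}(u))$ into the $H_2$-corner. This buys two things: it is self-contained (no appeal to the multiplicative-domain corollary), and it needs neither unitality nor contractivity of $\Theta$, so it sidesteps the paper's normalization step --- which, as written, is slightly loose, since the rescaling makes $\theta_0$ contractive rather than literally unital (unitality would require $\varphi(1)=I$). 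The only micro-step you leave implicit is why $\Theta(\iota_{11}(u))$ maps $H_2$ into $H_2$: for $\zeta\in\pi(e)K$ and $\eta\in H_1$ one has $\langle V^{*}\zeta,\eta\rangle=\langle\zeta,V\eta\rangle=0$ because $V\eta\in\pi(1-e)K\perp\pi(e)K$, hence $V^{*}(\pi(e)K)\subseteq H_2\oplus 0$; with that line added, the block form $\Theta(\iota_{11}(u))=\begin{bmatrix}\psi(u) & 0\\ 0 & 0\end{bmatrix}$ and the final bookkeeping are complete.
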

\begin{proof}
By the above lemma, $\theta_0=\begin{bmatrix}
id & \Phi \\
\Phi^* & \phi
\end{bmatrix}:S_{\mathcal A}(\mathcal{E})\rightarrow\mathcal{B}(H_2 \oplus H_1)$ is
a completely positive map. Without loss of generality we can
assume that $\theta_0$ is unital, since for every
positive real number $r$, the map $\begin{bmatrix}
id & r\Phi \\
r\Phi^* & r^2\varphi
\end{bmatrix}$ is completely positive.
By Arveson's extension theorem, $\theta_0$ has a unital  completely positive extension
$\theta:\mathcal{L}_{1}(\mathcal{E})\rightarrow\mathcal{B}(H_2\oplus
H_1)$. Put $p:= \begin{bmatrix}
I & 0 \\
0 & 0 \end{bmatrix}.$
By \cite[Corollary 5.2.2]{Effros}, for each $u\in\mathbb{K}_1(\mathcal{E})$ we have
$$\theta(\begin{bmatrix}
u & 0 \\
0 & 0 \end{bmatrix})=
\theta(p\begin{bmatrix}
u & 0 \\
0 & 0 \end{bmatrix}p)
=\theta(p)\theta(\begin{bmatrix}
u & 0 \\
0 & 0 \end{bmatrix})\theta(p)
=\begin{bmatrix}
id_{H_2} & 0 \\
0 & 0 \end{bmatrix}\theta(\begin{bmatrix}
u & 0 \\
0 & 0 \end{bmatrix})\begin{bmatrix}
id_{H_2} & 0 \\
0 & 0 \end{bmatrix}
 \in \begin{bmatrix}
\mathcal{B}(H_2) & 0 \\
0 & 0 \end{bmatrix}$$

Thus, $\theta$ is a corner preserving unital completely positive map on $\mathcal{L}_1(\mathcal{E}).$
Therefore, $\psi:=\theta|_{\mathbb{K}_{1}(\mathcal{E})}$ is a
unital completely positive map from $\mathbb{K}_{1}(\mathcal{E})$
into $\mathcal{B}(H_{2})$ such that $\theta=\begin{bmatrix}
\psi &   \Phi \\
 \Phi^* & \varphi
\end{bmatrix}$.
\end{proof}

The next theorem is a strengthened form of the main theorem of
\cite{Asadi}.

\begin{theorem}\label{t3.3}
Let $\varphi:\mathcal{A}\rightarrow\mathcal{B}(H_{1})$ be a
completely positive map and
$\Phi:\mathcal{E}\rightarrow\mathcal{B}(H_{1},H_{2})$ be a
completely semi-$\varphi$-map. Then there exist Hilbert spaces
 $K_{1},K_{2}$,  a bounded operator
 \linebreak$V:H_{1}\rightarrow K_{1}$, an isometry $W:{H}_{2}\rightarrow K_{2}$, a $*$-homomorphism
$\rho:\mathcal{A}\rightarrow\mathcal{B}(K_{1})$ and a
$\rho$-representation
    $\Psi:\mathcal{E}\rightarrow\mathcal{B}(K_{1},K_{2})$ such
    that for all $a\in \mathcal A$ and $x\in\mathcal{E},$
$$\hspace{1cm}
\varphi(a)=V^*\rho(a)V\hspace{1cm}\Phi(x)=W^*\Psi(x)V.$$
Furthermore, if $\varphi$ is unital, then $V$ is an isometry.
\end{theorem}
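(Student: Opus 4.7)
The plan is to combine Theorem~\ref{t3.2} with Stinespring's dilation theorem and Aramba\v{s}i\'{c}'s structural decomposition of representations of the linking algebra (Remark~\ref{r2.1}). First, I would apply Theorem~\ref{t3.2} to obtain a completely positive map
$$\theta = \begin{bmatrix} \psi & \Phi \\ \Phi^* & \varphi \end{bmatrix} : \mathcal{L}_1(\mathcal{E}) \longrightarrow \mathcal{B}(H_2 \oplus H_1),$$
whose $(1,1)$-corner $\psi$ is unital. Stinespring's theorem then supplies a unital $*$-representation $\pi : \mathcal{L}_1(\mathcal{E}) \to \mathcal{B}(K)$ on some Hilbert space $K$ together with a bounded operator $S : H_2 \oplus H_1 \to K$ satisfying $\theta(T) = S^*\pi(T)S$ for every $T \in \mathcal{L}_1(\mathcal{E})$.

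The heart of the argument is exploiting the projection $p := \begin{bmatrix} I & 0 \\ 0 & 0 \end{bmatrix} \in \mathcal{L}_1(\mathcal{E})$. I would set $q := \pi(p)$ and split $K = K_2 \oplus K_1$ with $K_2 := qK$ and $K_1 := (I - q)K$. Under this decomposition, Remark~\ref{r2.1} provides the block form
$$\pi = \begin{bmatrix} \sigma & \Psi \\ \Psi^* & \rho \end{bmatrix},$$
with $\rho : \mathcal{A} \to \mathcal{B}(K_1)$ a $*$-representation and $\Psi : \mathcal{E} \to \mathcal{B}(K_1, K_2)$ a $\rho$-representation; these will serve as the $\rho$ and $\Psi$ of the theorem.

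To extract $V$ and $W$, I would write $S$ in block form $\begin{bmatrix} A & B \\ C & D \end{bmatrix}$ relative to the decompositions $H_2 \oplus H_1$ and $K_2 \oplus K_1$. Unitality of $\psi$ gives $\theta(p) = p$, and a direct block computation with $S^*qS = \theta(p)$ forces $A^*A = I_{H_2}$ and $B = 0$, so $W := A$ is an isometry $H_2 \to K_2$. Likewise, $\theta(I - p) = \begin{bmatrix} 0 & 0 \\ 0 & \varphi(I) \end{bmatrix}$ together with $S^*(I - q)S = \theta(I - p)$ yields $C = 0$ and $D^*D = \varphi(I)$; setting $V := D$, the operator $V$ is an isometry precisely when $\varphi$ is unital. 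With $S$ now in block-diagonal form, plugging $T = \begin{bmatrix} 0 & 0 \\ 0 & a \end{bmatrix}$ and $T = \begin{bmatrix} 0 & x \\ 0 & 0 \end{bmatrix}$ into the identity $\theta(T) = S^*\pi(T)S$ and reading off the appropriate corners immediately yields $\varphi(a) = V^*\rho(a)V$ and $\Phi(x) = W^*\Psi(x)V$.

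The only delicate point is establishing the block-diagonal shape of the Stinespring operator $S$; this rests squarely on the unitality of $\psi$ supplied by Theorem~\ref{t3.2}, without which one would obtain only that $W$ is a contraction. Once this block structure is in hand, everything else amounts to extracting corners from Stinespring together with Aramba\v{s}i\'{c}'s structure theorem for representations of the linking algebra.
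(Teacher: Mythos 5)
Your proposal is correct and follows essentially the same route as the paper: apply Theorem~\ref{t3.2}, take a Stinespring dilation of the resulting block completely positive map, decompose $K$ via the projection $\pi\bigl(\begin{smallmatrix} I & 0 \\ 0 & 0 \end{smallmatrix}\bigr)$ (which coincides with the paper's closed spans) to write $\pi=\begin{bmatrix} \sigma & \Psi \\ \Psi^* & \rho \end{bmatrix}$, and use unitality of $\psi$, $\sigma$ and $\rho$ to kill the off-diagonal blocks of the Stinespring operator before reading off the corners. The only cosmetic difference is that you obtain the vanishing of the lower-left block by evaluating at $I-p$, while the paper evaluates at arbitrary $a\in\mathcal{A}$ and uses unitality of $\rho$; this is the same computation.
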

\begin{proof}
By the previous theorem,
$\begin{bmatrix}
id & \Phi \\
\Phi^* & \varphi
\end{bmatrix}:S_{\mathcal{A}}(\mathcal{E})\rightarrow\mathcal{B}(H_{2}\oplus H_{1})$
has a completely positive map extension
 $\theta=\begin{bmatrix}
\psi & \Phi \\
\Phi^* & \varphi
\end{bmatrix}:\mathcal{L}_{1}(\mathcal{E})\rightarrow\mathcal{B}(H_{2}\oplus H_{1})$.

By Stinespring's theorem for a completely positive maps on
$C^*$-algebras
 there is a triple $(\pi,K,W)$ consists of a unital $*$-representation
 \hbox{$\pi:\mathcal{L}_{1}(\mathcal{E})\rightarrow\mathcal{B}(K)$}
 and an operator
  $W\in\mathcal{B}(H_{2}\oplus H_{1},K)$ such that
for every $X\in\mathcal{L}_{1}(\mathcal{E})$ the following holds
$$\theta(X)=W^*\pi(X)W .$$

Similar to \cite[Proposition 3.1]{Arambašic}, we set
$K_{1}=[\pi(\begin{bmatrix}
0 & 0 \\
0 & \mathcal{A}
\end{bmatrix})K]$ and
$K_{2}=[\pi(\begin{bmatrix}
\mathbb{K}_{1}(\mathcal{E}) & 0 \\
0 & 0
\end{bmatrix})K].$
Hence   $K \cong K_2 \oplus K_1$ and we can write
$\pi=\begin{bmatrix}
\sigma & \Psi \\
\Psi^* & \rho
\end{bmatrix} $
and $W=\begin{bmatrix}
W_1 & W_2 \\
W_3 & W_4
\end{bmatrix} \in\mathcal{B}(H_{2}\oplus H_{1},K_{2}\oplus K_{1}),$
where $\sigma:\mathbb{K}_1(\mathcal{E})\rightarrow\mathcal{B}(K_2)$
and $\rho: \mathcal{A}\rightarrow\mathcal{B}(K_1)$  are
$*$-representations and
$\Psi:\mathcal{E}\rightarrow\mathcal{B}(K_1,K_2)$ is a
$\sigma$-$\rho$-representation.

 Then for every $\begin{bmatrix}
T & x \\
y^* & a
\end{bmatrix} \in\mathcal{L}_{1}(\mathcal{E})$ we have
$$\begin{bmatrix}
\psi(T) & \Phi(x) \\
\Phi(y)^* & \varphi(a)
\end{bmatrix}=\begin{bmatrix}
W_{1}^* & W_{3}^* \\
W_{2}^* & W_{4}^*
\end{bmatrix}\begin{bmatrix}
\sigma(T) & \Psi(x) \\
\Psi(y)^* & \rho(a)
\end{bmatrix} \begin{bmatrix}
W_{1} & W_{2} \\
W_{3} & W_{4}
\end{bmatrix}\hspace{2cm}(2).$$
In the above equation, set $T=I_{\mathcal{E}},$ $x=y=0$ and $a=0.$
Since $\sigma$ and $\rho$ are unital maps, one has
$$\begin{bmatrix}
id_{H_{2}} & 0 \\
0 & 0
\end{bmatrix}=\begin{bmatrix}
W_{1}^* & W_{3}^* \\
W_{2}^* & W_{4}^*
\end{bmatrix}\begin{bmatrix}
id_{K_{2}} & 0 \\
0 & 0
\end{bmatrix} \begin{bmatrix}
W_{1} & W_{2} \\
W_{3} & W_{4}
\end{bmatrix},$$
thus $W_{1}^*W_{1}=id_{H_{2}}$ and $W_{2}^*W_{2}=0.$ Now, set
$T=0,$ $y=0,$ $a=0$ and an arbitrary $x\in\mathcal{E}$ in equation
$(2),$ easy calculation shows that $\Phi(x)=W_{1}^*\Psi(x)W_{4}.$
Finally, setting $T=0,$ $x=y=0$ and an arbitrary element $a\in A$
in equation $(2)$ shows that $\varphi(a)=W_{4}^*\rho(a)W_{4}$ and
$W_{3}^*\rho(a)W_{3}=0.$
 Since $\rho$ is unital, $W_{3}=0.$
  If $\varphi$ is unital, one has
  $$id_{H_{1}}=\varphi(1)=W_{4}^*\rho(1)W_{4}=W_{4}^*id_{K_{1}}W_{4}=W_{4}^*W_{4}.$$
 \end{proof}

 We summarize the results of this section on completely semi-$\varphi$-maps in the following corollary:

\begin{corollary}\label{c3.4}
Let $\mathcal A$ be a unital $C^*$-algebra and $\mathcal{E}$ a
right Hilbert $\mathcal A$-module.
 For every pair of given maps $\varphi:\mathcal{A}\rightarrow\mathcal{B}(H_{1})$ and
 $\Phi:\mathcal{E}\rightarrow\mathcal{B}(H_{1},H_{2})$ the following are equivalent:
\item(i) $\Phi$ is a completely semi-$\varphi$-map
\item(ii) $\begin{bmatrix}
id & \Phi \\
\Phi^* & \varphi
\end{bmatrix}:S_{\mathcal{A}}(\mathcal{E})\rightarrow\mathcal{B}(H_2
\oplus H_1)$ is a completely positive map
\item(iii) There exists a unital completely positive map $\psi:\mathbb{K}_{1}(\mathcal{E})\rightarrow\mathcal{B}(H_{2})$
such that $\begin{bmatrix}
\psi & \Phi \\
\Phi^* & \varphi
\end{bmatrix}:\mathcal{L}_{1}(\mathcal{E})\rightarrow\mathcal{B}(H_{2}\oplus H_{1})$
is a completely positive map
\item(v) There exist Hilbert spaces $K_{1},K_{2}$, a bounded
operator $V:H_{1}\rightarrow K_{1}$, an isometry
$\hspace{1mm}W:H_{2}\rightarrow K_{2}$, and a unital
$*$-representation $\pi: \mathcal{L}_{1}(\mathcal{E}) \rightarrow
\mathcal{B}(K_{2} \oplus K_{1})$ such that $$
\begin{bmatrix}
* & \Phi\\
\Phi^*  & \varphi
\end{bmatrix}(\cdot)=\begin{bmatrix}
W^* & 0 \\
0 & V^*
\end{bmatrix} \pi(\cdot) \begin{bmatrix}
W & 0 \\
0 & V
\end{bmatrix},$$
\item(iv) There exists a pair $((\rho,K_{1},V),(\Psi,K_{2},W))$ consists of
Hilbert spaces $K_{1},K_{2}$, a bounded operator
$V:H_{1}\rightarrow K_{1}$, an isometry
$\hspace{1mm}W:H_{2}\rightarrow K_{2},$ a unital
$*$-representation
$\rho:\mathcal{A}\rightarrow\mathcal{B}(K_{1})$,
 and a $\rho$-representation $\Psi:\mathcal{E}\rightarrow\mathcal{B}(K_{1},K_{2})$ such that
$$\varphi(a)=V^*\rho(a)V,\hspace{1cm}\Phi(x)=W^*\Psi(x)V,$$
for all $a\in A$ and $x\in\mathcal{E}.$
\end{corollary}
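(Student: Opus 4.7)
The plan is to prove the five-way equivalence by collating the results of this section into a cycle of implications. The equivalence (i) $\Leftrightarrow$ (ii) is exactly Lemma~\ref{l3.1}, and the implication (i) $\Rightarrow$ (iii) is exactly Theorem~\ref{t3.2}; neither step needs a new argument. What remains is to close the cycle (iii) $\Rightarrow$ (v) $\Rightarrow$ (iv) $\Rightarrow$ (i).

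For (iii) $\Rightarrow$ (v), I would apply Stinespring's theorem to the unital completely positive map $\theta$ on $\mathcal{L}_1(\mathcal{E})$ supplied by (iii), obtaining a unital $*$-representation $\pi : \mathcal{L}_1(\mathcal{E}) \to \mathcal{B}(K)$ and an isometry $U : H_2 \oplus H_1 \to K$ with $\theta(X) = U^* \pi(X) U$. Using Remark~\ref{r2.1}, I would decompose $K = K_2 \oplus K_1$ so that $\pi$ takes the $2 \times 2$ block form whose diagonal entries are unital $*$-representations $\sigma$ of $\mathbb{K}_1(\mathcal{E})$ and $\rho$ of $\mathcal{A}$ and whose off-diagonal entries come from a $\sigma$-$\rho$-representation $\Psi$ of $\mathcal{E}$. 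The corner-preservation of $\theta$ established in the proof of Theorem~\ref{t3.2} then forces $U$ to be block-diagonal, $U = \mathrm{diag}(W, V)$, and unitality of $\psi$ ensures that $W$ is an isometry.

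For (v) $\Rightarrow$ (iv), I would simply re-use the block decomposition $\pi = \left[\begin{smallmatrix}\sigma & \Psi \\ \Psi^* & \rho\end{smallmatrix}\right]$ from Remark~\ref{r2.1}; block-matrix multiplication of the formula in (v) then extracts the identities $\varphi(a) = V^* \rho(a) V$ and $\Phi(x) = W^* \Psi(x) V$ of (iv). For (iv) $\Rightarrow$ (i), the computation is direct: for $x \in \mathbb{M}_n(\mathcal{E})$ one has $\Phi_n(x) = W_n^* \Psi_n(x) V_n$, where $V_n$ and $W_n$ are the natural amplifications. Since $\Psi_n$ is a $\rho_n$-representation and $W_n$ remains an isometry, I obtain
\[
\Phi_n(x)^* \Phi_n(x) = V_n^* \Psi_n(x)^* W_n W_n^* \Psi_n(x) V_n \le V_n^* \rho_n(\langle x, x \rangle) V_n = \varphi_n(\langle x, x \rangle),
\]
so $\Phi$ is a completely semi-$\varphi$-map.

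The main obstacle I anticipate lies in the step (iii) $\Rightarrow$ (v): one must upgrade a generic Stinespring isometry $U$ into the block-diagonal form appearing in (v). The resolution, essentially the maneuver already carried out in the proof of Theorem~\ref{t3.3}, is to combine corner-preservation of $\theta$ with the Arambašić block decomposition of $\pi$ to kill the off-diagonal entries of $U$; once this is done the remaining implications are either already established or follow by routine block-matrix manipulation.
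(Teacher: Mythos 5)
Your proof is correct, but it closes the equivalences along a different cycle than the paper. The paper's proof is a pure assembly of earlier results: (i)$\Leftrightarrow$(ii) by Lemma \ref{l3.1}, (i)$\Rightarrow$(iv) by Theorem \ref{t3.3}, (iv)$\Leftrightarrow$(v) and (v)$\Rightarrow$(iii) by Remark \ref{r2.1}, and (iii)$\Rightarrow$(ii) trivially by restriction to $S_{\mathcal{A}}(\mathcal{E})$; it never proves (iii)$\Rightarrow$(v) or (iv)$\Rightarrow$(i) directly. You instead go (i)$\Rightarrow$(iii) via Theorem \ref{t3.2}, re-run the Stinespring-plus-block-decomposition argument of Theorem \ref{t3.3} to obtain (iii)$\Rightarrow$(v), use Remark \ref{r2.1} for (v)$\Rightarrow$(iv), and finish with the explicit amplification computation $\Phi_n(x)^*\Phi_n(x)=V_n^*\Psi_n(x)^*W_nW_n^*\Psi_n(x)V_n\le V_n^*\rho_n(\langle x,x\rangle)V_n=\varphi_n(\langle x,x\rangle)$ for (iv)$\Rightarrow$(i), a step the paper handles only implicitly through the chain (iv)$\Rightarrow$(v)$\Rightarrow$(iii)$\Rightarrow$(ii)$\Rightarrow$(i). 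Both routes are valid; yours makes the corollary more self-contained and exhibits directly why a compression $W^*\Psi(\cdot)V$ with $W$ an isometry is completely semi-$\varphi$, at the price of repeating the dilation argument already contained in Theorem \ref{t3.3}, while the paper's version is shorter by citation. Two small slips in your (iii)$\Rightarrow$(v), neither of which breaks the argument: the Stinespring operator $U$ is an isometry only when $\theta=\begin{bmatrix} \psi & \Phi \\ \Phi^* & \varphi \end{bmatrix}$ is unital, which can fail because $\varphi$ need not be unital (one only has $U^*U=\theta(1)$); and what forces $U$ to be block-diagonal is not the corner-preservation argument from Theorem \ref{t3.2} (in (iii) the extension is block-form by hypothesis) but evaluating $\theta(X)=U^*\pi(X)U$ at $X=\begin{bmatrix} I_{\mathcal{E}} & 0 \\ 0 & 0 \end{bmatrix}$ and $X=\begin{bmatrix} 0 & 0 \\ 0 & 1 \end{bmatrix}$, exactly as in the proof of Theorem \ref{t3.3}; unitality of $\psi$ (and of the corner representation $\sigma$) then still yields $W^*W=\mathrm{id}_{H_2}$, which is all that (v) requires.
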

\begin{proof}
$(i)\Leftrightarrow(ii)$ by Lemma \ref{l3.1}.
 $(v)\Leftrightarrow(iv)$ and $(v)\Rightarrow(iii)$ by Remark \ref{r2.1}.
  $(i)\Rightarrow(iv)$ by Theorem \ref{t3.3} and obviously $(iii)\Rightarrow(ii)$.
\end{proof}


\section{UNIQUENESS OF MINIMAL DILATION PAIRS}

 Assume $\mathcal{A}$ is a unital $C^*$-algebra and $\mathcal{E}$ is a right Hilbert $\mathcal{A}$-module.
As it is shown in the previous section, if
$\varphi:\mathcal{A}\to\mathcal{B}(H)$ is a completely positive
map, every completely semi-$\varphi$-map on $\mathcal{E}$ is
dilatable. In this section, we show that every completely
semi-$\varphi$-map on a Hilbert $C^*$-module  has a minimal
dilation pair. Furthermore, we show that two minimal dilation
pairs for a given completely semi-$\varphi$-map are unitarily
equivalent and implement unitarily equivalent $*$-representations
on the linking $C^*$-algebra of $\mathcal{E}$.

\begin{definition}\label{d4.1}
Let $\mathcal{A}$ be a $C^*$-algebra  and $\mathcal{E}$ a right Hilbert $\mathcal A$-module.
 A map  $\Phi:\mathcal{E}\to\mathcal{B}(H,K)$ is a \textit{CP-extendable} map,
if there exist completely positive maps
$\psi:\mathbb{K}_{1}(\mathcal{E})\rightarrow\mathcal{B}(K)$ and
$\varphi:\mathcal{A}\to\mathcal{B(H)}$ such that $\begin{bmatrix}
\psi & \Phi \\
\Phi^* & \varphi
\end{bmatrix}: \mathcal{L}_{1}(\mathcal{E}) \rightarrow \mathcal{B}(K \oplus
H)$ is a completely positive map. In this case we call
 the pair $(\varphi,\Phi)$ a \textit{CP-extendable pair}.
\end{definition}

As it is shown in Corollary \ref{c3.4}, if
$\varphi:\mathcal{A}\rightarrow\mathcal{B}(H)$ is a completely
positive map, then,  every completely semi-$\varphi$-map
  $\Phi:\mathcal{E}\rightarrow\mathcal{B}(H,K)$ has a CP-extension on the linking $C^*$-algebra $\mathcal{L(E)}$ which acts block-wise.
   Thus $\Phi$ is a CP-extendable map and $(\varphi,\Phi)$ is a CP-extendable pair.

 Since completely semi-$\varphi$-maps are CP-extendable, the following theorem is a generalization of Corollary \ref{c3.4} and can
be proved, by using  Stinespring's theorem for linking
$C^*$-algebra and \cite[Proposition 3.1]{Arambašic}.

\begin{theorem}\label{t4.2}
Let $\mathcal{A}$ be a unital $C^*$-algebra and $\mathcal{E}$ a right Hilbert $\mathcal{A}$-module.
 For a given map
 $\Phi:\mathcal{E}\to\mathcal{B}(H_{1},H_{2})$ the following are equivalent:
\item(i) $\Phi$ is CP-extendable,
\item(ii) There exist Hilbert spaces $K_{1},K_{2}$, bounded
operators $V:H_{1}\to K_{1},\hspace{1mm}W:H_{2}\to K_{2}$ and a
unital $*$-representation $\pi: \mathcal{L}_{1}(\mathcal{E}) \to
\mathcal{B}(K_{2} \oplus K_{1})$ and a completely positive map
$\varphi:\mathcal{A}\to\mathcal{B}(H_1)$ such that
$$
\begin{bmatrix}
* & \Phi\\
\Phi^*  & \varphi
\end{bmatrix}(\cdot)=\begin{bmatrix}
W^* & 0 \\
0 & V^*
\end{bmatrix} \pi(\cdot) \begin{bmatrix}
W & 0 \\
0 & V
\end{bmatrix},$$
\item(iii) $\Phi$ is \textit{dilatable}.
\end{theorem}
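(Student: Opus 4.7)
The plan is to prove the cyclic chain of implications (i) $\Rightarrow$ (ii) $\Rightarrow$ (iii) $\Rightarrow$ (i), imitating the argument of Theorem \ref{t3.3} and Corollary \ref{c3.4} but without invoking Lemma \ref{l3.1}, since the CP-extendability is now part of the hypothesis rather than a conclusion to be established from a scalar-valued $S_\mathcal{A}(\mathcal{E})$ compression.

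For (i) $\Rightarrow$ (ii), I would start with the CP extension $\theta=\bigl[\begin{smallmatrix}\psi & \Phi\\ \Phi^* & \varphi\end{smallmatrix}\bigr]:\mathcal{L}_1(\mathcal{E})\to\mathcal{B}(H_2\oplus H_1)$ provided by the definition. Applying Stinespring's theorem to $\theta$ yields a unital $*$-representation $\pi:\mathcal{L}_1(\mathcal{E})\to\mathcal{B}(K)$ and a bounded operator $\tilde W:H_2\oplus H_1\to K$ with $\theta=\tilde W^*\pi(\cdot)\tilde W$. The key structural step is to decompose $K$ according to the central projection $p=\bigl[\begin{smallmatrix}I & 0\\ 0 & 0\end{smallmatrix}\bigr]\in\mathcal{L}_1(\mathcal{E})$: setting $K_2:=\pi(p)K$ and $K_1:=(1-\pi(p))K$ we get $K\cong K_2\oplus K_1$, and exactly as in the argument following (2) in the proof of Theorem \ref{t3.3}, one obtains the block form $\pi=\bigl[\begin{smallmatrix}\sigma & \Psi\\ \Psi^* & \rho\end{smallmatrix}\bigr]$ via \cite[Proposition 3.1]{Aramba�ic}.

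It remains to force $\tilde W$ to be block-diagonal. Writing $\tilde W=\bigl[\begin{smallmatrix}W_1 & W_2\\ W_3 & W_4\end{smallmatrix}\bigr]:H_2\oplus H_1\to K_2\oplus K_1$ and testing the identity $\theta(X)=\tilde W^*\pi(X)\tilde W$ on $X=p$ and $X=1-p$ gives (since $\theta(p)$ and $\theta(1-p)$ have only the (1,1)- and (2,2)-corners nonzero respectively, and $\pi(p)$, $\pi(1-p)$ are the diagonal projections $I_{K_2}\oplus 0$ and $0\oplus I_{K_1}$) the identities $W_2^*W_2=0$ and $W_3^*W_3=0$, so $W_2=W_3=0$. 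Setting $W:=W_1$ and $V:=W_4$ gives the block-diagonal dilation required in (ii). I expect this block-diagonalization step to be the main obstacle, though it is essentially the same calculation already done in Theorem \ref{t3.3}.

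For (ii) $\Rightarrow$ (iii), the block form of $\pi$ from Remark \ref{r2.1} means $\rho$ is a $*$-representation of $\mathcal{A}$ and $\Psi:\mathcal{E}\to\mathcal{B}(K_1,K_2)$ is a $\rho$-representation; reading off the (1,2)-corner of the matrix identity in (ii) gives $\Phi(x)=W^*\Psi(x)V$, which is exactly dilatability. For (iii) $\Rightarrow$ (i), given a dilation $\Phi(x)=W^*\Psi(x)V$ with $\Psi$ a $\rho$-representation, Remark \ref{r2.1} furnishes a $*$-representation $\sigma$ of $\mathbb{K}_1(\mathcal{E})$ making $\pi:=\bigl[\begin{smallmatrix}\sigma & \Psi\\ \Psi^* & \rho\end{smallmatrix}\bigr]$ a $*$-representation of $\mathcal{L}_1(\mathcal{E})$; then defining $\psi(u):=W^*\sigma(u)W$ and $\varphi(a):=V^*\rho(a)V$ and compressing $\pi$ by $\bigl[\begin{smallmatrix}W & 0\\ 0 & V\end{smallmatrix}\bigr]$ produces a completely positive map on $\mathcal{L}_1(\mathcal{E})$ whose (1,2)-corner is $\Phi$, so $\Phi$ is CP-extendable and the cycle closes.
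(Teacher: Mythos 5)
Your proposal is correct and takes essentially the paper's intended route: apply Stinespring to the block-wise CP extension on $\mathcal{L}_1(\mathcal{E})$, decompose $K$ via $\pi(p)$ and use Aramba$\check{s}$i$\acute{c}$'s Proposition 3.1 (Remark \ref{r2.1}) to get the block form $\begin{bmatrix} \sigma & \Psi \\ \Psi^* & \rho \end{bmatrix}$, and block-diagonalize the Stinespring operator exactly as in the proof of Theorem \ref{t3.3}, with the remaining implications read off from Remark \ref{r2.1}. One harmless slip: $p=\left[\begin{smallmatrix} I & 0 \\ 0 & 0 \end{smallmatrix}\right]$ is not a central projection of $\mathcal{L}_1(\mathcal{E})$, but your argument only uses that $\pi(p)$ is a projection together with the corner structure, so nothing breaks.
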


In the following we recall a definition from \cite{BRS} and show
that \cite[Theorem 2.4]{BRS} holds
 for completely semi-$\varphi$-maps.

\begin{definition}\label{d4.3}
Let $\varphi:\mathcal{A}\rightarrow\mathcal{B}(H_1)$ be a
completely positive map and
$\Phi:\mathcal{E}\rightarrow\mathcal{B}(H_{1},H_{2})$ be a
completely semi-$\varphi$-map.
 A \textit{dilation pair} for $(\varphi,\Phi)$ is a pair of triples $((\rho,K_{1},V),(\Psi,K_{2},W))$ consists of
Hilbert spaces $K_{1},K_{2}$,
a unital $*$-representation
$\rho:\mathcal{A}\rightarrow\mathcal{B}(K_{1})$
 and a $\rho$-morphism $\Psi:\mathcal{E}\rightarrow\mathcal{B}(K_{1},K_{2})$ and bounded
operators
$V:H_{1}\rightarrow K_{1}$ and $W:H_{2}\rightarrow K_{2},$ such that
$$\Phi(x)=W^*\Psi(x)V \hspace{5mm} ,\hspace{5mm}  \varphi(a)=V^*\rho(a)V,$$
for all $a\in \mathcal{A}$ and $x\in\mathcal{E}.$
   A dilation pair $((\rho,K_{1},V),(\Psi,K_{2},W))$ is called \textit{minimal} when the following conditions are satisfied
\item (i) $[\rho(\mathcal{A})V H_1]=K_1$,
\item (ii) $\Psi$ be a nondegenerate map.

\end{definition}

Suppose that $\varphi:\mathcal{A}\rightarrow\mathcal{B}(H_1)$ is a
completely positive map and
 $\Phi:\mathcal{E}\rightarrow\mathcal{B}(H_1,H_2)$ is a completely semi-$\varphi$-map.
 By Theorem \ref{t3.3} there exists a dilation pair $((\rho,K_{1},V),(\Psi,K_{2},W))$ for $(\varphi,\Phi),$ such that $W$ is an isometry.
   We can replace $(\rho,K_{1},V)$
    by a minimal Stinespring dilation triple for $\varphi$.
    So without loss of generality we can assume that $(\rho,K_{1},V)$ is a minimal Stinespring dilation triple for $\varphi.$
     Set
         $\mathcal{L}:=[\Psi(\mathcal{E})VH_{1}]=[\Psi(\mathcal{E})K_1]$ and define
          $\Gamma:\mathcal{E}\rightarrow\mathcal{B}(K_{1},L)$ by
$$\Gamma(x)k:=\Psi(x)k$$
for all $x\in\mathcal{E}$ and $k\in K_{1}.$

 Assume $\jmath:L\rightarrow K_{2}$ is the inclusion map of
 $L$ into $K_{2}$, so, $\jmath^*$ is the orthogonal projection from
 $K_{2}$ onto $L.$ Thus $\Gamma(x)=\jmath^*\Psi(x)$ for every $x\in\mathcal{E}.$
  Note that $\Psi$ is a $\rho$-morphism, therefore $\Psi$ is a $\rho$-module map.
   Thus for every $a\in \mathcal A,$ $h\in H_{1}$ and $x,y\in\mathcal{E}$
$$\Gamma(x)^*\Gamma(y)(\rho(a)Vh)=\rho(\langle x,y\rangle)(\rho(a)Vh).$$
Since $[\rho(A)VH_{1}]=K_{1},$ $\Gamma$ is a $\rho$-map.
 Now define
$T:L\rightarrow H_{2}$ by $T(l):=W^*(l)$
for all $l\in L.$
 Consider
$S:=T^*\in\mathcal{B}(H_{2},L),$ then
$((\rho,K_{1},V),(\Gamma,L,S))$ is a minimal dilation pair for
$(\varphi,\Phi).$ Note that $W$ is an isometry and $T=W^*\jmath,$
thus $T$ and $S=T^*$ are contractions with norm one.

The following theorem on the uniqueness of minimal dilation pairs of completely semi-$\varphi$-maps
 is in fact the same as \cite[Theorem 2.4]{BRS}.

\begin{theorem}\label{t4.4}
Let $\Phi$ and $\varphi$ be as in definition \ref{d4.3}. Assume
$((\rho,K_{1},V),(\Psi,K_{2},W))$  and
$((\pi,L_{1},U),(\Gamma,L_{2},S))$ are two minimal dilation pairs
for $(\varphi,\Phi).$ Then there exist unitary operators
$T_{1}:K_{1}\rightarrow L_{1}$ and $T_{2}:K_{2}\rightarrow L_{2}$
such that
\item(i) $T_{1}V=U$ and $T_{1}\rho(a)=\pi(a)T_{1}$ for all $a\in \mathcal A$.
\item(ii) $T_{2}W=S$ and $T_{2}\Psi(x)=\Gamma(x)T_{1}$ for all $x\in\mathcal{E}$.
\item(iii) $\begin{bmatrix}
T_{2} & 0\\
0  & T_{1}
\end{bmatrix}\begin{bmatrix}
W & 0\\
0  & V
\end{bmatrix}=\begin{bmatrix}
S & 0\\
0  & U
\end{bmatrix}$ and $\begin{bmatrix}
T_{2} & 0\\
0  & T_{1}
\end{bmatrix}\begin{bmatrix}
\sigma & \Psi\\
\Psi^*  & \rho
\end{bmatrix}\begin{bmatrix}
T_{2}^* & 0\\
0  & T_{1}^*
\end{bmatrix}=\begin{bmatrix}
\tau & \Gamma\\
\Gamma^*  & \pi
\end{bmatrix}$, where $\sigma:\mathbb{K}_1(\mathcal{E})\to\mathcal{B}(K_2)$
and $\tau:\mathbb{K}_1(\mathcal{E})\to\mathcal{B}(L_2)$
are unique $*$-homomorphisms which satisfy the equations
$\sigma(x\otimes y)=\Psi(x)\Psi(y)^*$ and $\tau(x\otimes
y)=\Gamma(x)\Gamma(y)^*$, for all $x,y\in\mathcal{E}$.

Consequently, representations $\rho$ and $\begin{bmatrix}
\sigma & \Psi\\
\Psi^*  & \rho
\end{bmatrix}$ and $\sigma$ are unitarily equivalent
to representations $\pi$ and $\begin{bmatrix}
\tau & \Gamma\\
\Gamma^*  & \pi
\end{bmatrix}$ and $\tau$, respectively.
\end{theorem}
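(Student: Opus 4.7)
\medskip

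\noindent\textbf{Proof plan.} The plan is to construct $T_1$ first from the Stinespring data, then bootstrap $T_2$ from $T_1$ using the $\rho$-morphism identities for $\Psi$ and $\Gamma$, and finally read off (iii) as bookkeeping. Throughout I will use two facts built into the setup: (a) both $(\rho,K_1,V)$ and $(\pi,L_1,U)$ are minimal Stinespring dilations of the completely positive map $\varphi$, because minimality of a dilation pair forces $[\rho(\mathcal{A})VH_1]=K_1$ and symmetrically $[\pi(\mathcal{A})UH_1]=L_1$; (b) a $\rho$-map satisfies $\Psi(x)^*\Psi(y)=\rho(\langle x,y\rangle)$, and similarly for $\Gamma$, which allows one to compute inner products of vectors of the form $\Psi(x)k$ purely in terms of $\rho$ and the $\mathcal{A}$-valued inner product on $\mathcal{E}$.

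\medskip

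First I would invoke the classical uniqueness of the minimal Stinespring dilation of $\varphi$ to produce a unitary $T_1:K_1\to L_1$ defined on the total set $\rho(\mathcal{A})VH_1$ by $T_1(\rho(a)Vh):=\pi(a)Uh$. A direct computation using $V^*\rho(a^*b)V=\varphi(a^*b)=U^*\pi(a^*b)U$ shows that $T_1$ is isometric on that set; surjectivity of $T_1$ comes from minimality of the $\pi$-side. Setting $a=1$ gives $T_1V=U$, and the intertwining $T_1\rho(a)=\pi(a)T_1$ follows from the definition, so (i) is established.

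\medskip

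Next I would define $T_2:K_2\to L_2$ on the total subset $\Psi(\mathcal{E})K_1\subseteq K_2$ by the formula $T_2(\Psi(x)k):=\Gamma(x)T_1k$. Well-definedness and isometry reduce to checking that
\[
\langle \Psi(x)k_1,\Psi(y)k_2\rangle = \langle k_1,\rho(\langle x,y\rangle)k_2\rangle = \langle T_1k_1,\pi(\langle x,y\rangle)T_1k_2\rangle = \langle \Gamma(x)T_1k_1,\Gamma(y)T_1k_2\rangle,
\]
where the first and last equalities use that $\Psi$ is a $\rho$-map and $\Gamma$ is a $\pi$-map, and the middle one uses that $T_1$ intertwines $\rho$ with $\pi$ and is isometric. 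The image $T_2([\Psi(\mathcal{E})K_1])=[\Gamma(\mathcal{E})T_1K_1]=[\Gamma(\mathcal{E})L_1]=L_2$ by nondegeneracy of $\Gamma$ and surjectivity of $T_1$, so $T_2$ extends to a unitary. The intertwining $T_2\Psi(x)=\Gamma(x)T_1$ is built into the definition.

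\medskip

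To obtain $T_2W=S$, the key step is to test both sides against the total family $\{\Gamma(y)Uk:y\in\mathcal{E},\,k\in H_1\}$ in $L_2$; that this family is total follows from nondegeneracy of $\Gamma$ combined with $[\pi(\mathcal{A})UH_1]=L_1$ and the module identity $\Gamma(x)\pi(a)=\Gamma(xa)$ (using unitality of $\mathcal{A}$, so $\mathcal{E}\mathcal{A}=\mathcal{E}$). For any $h\in H_2$ and such a test vector,
\[
\langle T_2Wh,\Gamma(y)Uk\rangle = \langle T_2Wh,T_2\Psi(y)Vk\rangle = \langle Wh,\Psi(y)Vk\rangle = \langle h,\Phi(y)k\rangle = \langle Sh,\Gamma(y)Uk\rangle,
\]
where the first step uses $\Gamma(y)T_1=T_2\Psi(y)$ and $T_1V=U$, the second uses that $T_2$ is isometric, and the dilation identity $\Phi(y)=W^*\Psi(y)V=S^*\Gamma(y)U$ is used at both ends. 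This gives (ii), and finally (iii) is an immediate rewriting in block form of (i), (ii), and the well-known fact (Remark~\ref{r2.1}) that the $11$-corner $\sigma$ on $\mathbb{K}_1(\mathcal{E})$ is uniquely determined by $\Psi$ via $\sigma(x\otimes y)=\Psi(x)\Psi(y)^*$, and similarly for $\tau$, so intertwining the corners of the two linking-algebra representations is automatic.

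\medskip

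The main obstacle I anticipate is verifying that the total set $[\Gamma(\mathcal{E})UH_1]$ equals $L_2$ (needed for $T_2W=S$): it is not literally part of minimality, and one has to combine nondegeneracy of $\Gamma$ with the Stinespring minimality on $L_1$ and the module-compatibility $\Gamma(x)\pi(a)=\Gamma(xa)$. Once that density is in hand, everything else is a direct check.
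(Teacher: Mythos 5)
Your proof is correct and follows essentially the same route as the paper, which simply defers to the argument of \cite[Theorem 2.4]{BRS}: a unitary $T_{1}$ from uniqueness of the minimal Stinespring dilation of $\varphi$, a unitary $T_{2}$ defined on the total set $\Psi(\mathcal{E})K_{1}$ via the $\rho$-map identity $\Psi(x)^*\Psi(y)=\rho(\langle x,y\rangle)$, the relation $T_{2}W=S$ by testing against a total set in $L_{2}$, and (iii) as block-matrix bookkeeping together with the uniqueness of $\sigma$ and $\tau$. Your explicit check that $[\Gamma(\mathcal{E})UH_{1}]=L_{2}$ (via $\Gamma(xa)=\Gamma(x)\pi(a)$, unitality of $\mathcal{A}$, and $[\pi(\mathcal{A})UH_{1}]=L_{1}$) is precisely the density point this setting needs, and you handle it correctly.
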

\begin{proof}
$(i)$ and $(ii)$ have the same proof as \cite[Theorem 2.4]{BRS} and
$(iii)$ can be obtained from $(i)$ and $(ii)$.
\end{proof}

\begin{remark}\label{r4.5}
Let $\varphi:\mathcal{A}\rightarrow\mathcal{B}(H_1)$ be a
completely positive map and
$\Phi:\mathcal{E}\rightarrow\mathcal{B}(H_1,H_2)$ be a completely
semi-$\varphi$-map.
 By the preceding discussion on the existence of a minimal dilation pair for completely semi-$\varphi$-maps, there is a
 minimal dilation pair $((\pi,L_{1},U),(\Gamma,L_2,S))$ for $(\varphi,\Phi)$ such that $S$ is contractive.
  Theorem \ref{t4.4} implies that for every minimal  dilation pair
   $((\pi',L'_{1},U'),(\Gamma',L'_2,S'))$ for $(\varphi,\Phi),$ $S'$ is contractive.
  On the other hand by  \cite[Theorem 2.1]{BRS} for every $\varphi$-map
  there is a minimal dilation pair  $((\rho,K_{1},V),(\Psi,K_2,W))$  such that $W$ is coisometry  , thus, by Theorem
  \ref{t4.4} if $((\rho',K'_{1},V'),(\Psi',K'_2,W'))$ is an another minimal
  dilation pair for the $\varphi$-map, then $W'$ is coisometry.
   Therefore there exist
   many examples of completely semi-$\varphi$-maps which are not $\varphi$-map.

\end{remark}

In the following we show that this new notion of dilation for
completely semi-$\varphi$-maps is compatible with the previous
notion of minimal dilation pair for completely positive maps on
$C^*$-algebras. For this purpose we recall the definition of
irreducible maps on Hilbert $C^*$-modules and show that a unital
completely positive map on a $C^*$-algebra is pure if and only if
its minimal dilation pair (in sense of Definition \ref{d4.3}) is
irreducible.


 \begin{definition}\label{d4.6} Let
$\Psi:\mathcal{E}\rightarrow\mathcal{B}(H_1.H_2)$ be a map and
$K_1\leq H_1$ and $K_2\leq H_2.$ The pair $(K_1,K_2)$ is said to be
$\Psi$-invariant if $\Psi(\mathcal{E})K_1\subseteq K_2$ and
$\Psi(\mathcal{E})^* K_2\subseteq K_1.$ $\Psi$ is said to be
irreducible if $(0,0)$ and $(H_1,H_2)$ are the only $\Psi$-invariant
pairs.
 \end{definition}


 \begin{remark} \label{r4.7} The above definition is a
modification of \textbf{Definition 3.3 \cite{Arambašic}}, just we
state it for every map not just representations.
Aramba$\check{s}$i$\acute{c}$ showed that if
$\rho:\mathcal{A}\rightarrow\mathcal{B}(H)$ is a $*$-representation
and $\Psi:\mathcal{E}\rightarrow \mathcal{B}(\mathcal{H,K})$ is a
$\rho$-representation for $\mathcal{E}$ such that
$[\Psi(\mathcal{E})H]=K$ then $\Psi$ is irreducible (in sense of
Definition \ref{d4.6} )
  if and only if $\rho$ is irreducible \cite[Proposition 3.6]{Arambašic}.
\end{remark}

 By a result of Arveson \cite[Corollary
1.4.3]{Arv} the completely positive map $\varphi$ is pure if and
only if it can be dilated to an irreducible $*$-representation of
$\mathcal{A}$ ( in other words, its minimal Stinespring dilation
triple is irreducible ). The following corollary is a generalization
of this fact.


\begin{corollary}
Let $\mathcal{A}$ be a unital $C^*$-algebra
and $\varphi:\mathcal{A}\rightarrow\mathcal{B}(H)$ be a unital
completely positive map.
 Assume $((\rho,K_1,V),(\Psi,K_2,W))$ is the minimal dilation pair for $(\varphi,\varphi).$
  Then $\varphi$ is pure if and only if $\Psi$ is an irreducible map.
\end{corollary}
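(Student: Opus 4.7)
The plan is to reduce the claim to two known results: Arveson's characterization of pure unital completely positive maps via irreducibility of their minimal Stinespring dilation, and Aramba$\check{s}$i$\acute{c}$'s criterion linking irreducibility of a $\rho$-representation on a Hilbert $C^*$-module to irreducibility of the underlying $*$-representation $\rho$.

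First I would verify that, when $\mathcal{A}$ is viewed as a Hilbert $\mathcal{A}$-module over itself and $\Phi = \varphi$, the triple $(\rho, K_1, V)$ coming from the minimal dilation pair for $(\varphi, \varphi)$ is (up to the unitary equivalence given by Theorem~\ref{t4.4}) a minimal Stinespring triple of $\varphi$. Indeed, the identity $\varphi(a) = V^* \rho(a) V$ with $\rho$ a unital $*$-representation already exhibits a Stinespring dilation, and the minimality condition (i) of Definition~\ref{d4.3}, namely $[\rho(\mathcal{A}) V H] = K_1$, is precisely Stinespring minimality. Consequently \cite[Corollary~1.4.3]{Arv} gives the first equivalence: $\varphi$ is pure if and only if $\rho$ is irreducible.

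To close the loop I would then invoke the nondegeneracy clause (ii) of Definition~\ref{d4.3}: it yields $[\Psi(\mathcal{A}) K_1] = K_2$, which is exactly the hypothesis under which \cite[Proposition~3.6]{Aramba�ic}, quoted in Remark~\ref{r4.7}, produces the equivalence ``$\Psi$ is irreducible if and only if $\rho$ is irreducible''. Chaining this with the previous step delivers ``$\varphi$ pure $\iff$ $\Psi$ irreducible'', as required. The argument is essentially a bookkeeping reduction to existing literature and the only subtle point is a compatibility check: the notion of irreducibility in Definition~\ref{d4.6} is stated for arbitrary maps, whereas Aramba$\check{s}$i$\acute{c}$'s proposition is phrased for $\rho$-representations, but Remark~\ref{r4.7} explicitly records that the two notions agree in the representation setting, so no further work is needed.
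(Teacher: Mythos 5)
Your proposal is correct and follows essentially the same route as the paper: identify $(\rho,K_1,V)$ as a minimal Stinespring triple via condition (i) of Definition~\ref{d4.3} and apply Arveson's \cite[Corollary 1.4.3]{Arv}, then use nondegeneracy (ii) together with Aramba$\check{s}$i$\acute{c}$'s \cite[Proposition 3.6]{Aramba�ic} to pass between irreducibility of $\rho$ and of $\Psi$. The only cosmetic difference is that the paper argues the two implications separately (citing \cite[Lemma 3.5]{Aramba�ic} for the converse direction) while you chain the two equivalences directly, which is a harmless streamlining.
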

\begin{proof}
Assume  $\varphi$ is pure, thus by  \cite[Corollary 1.4.3]{Arv}
every minimal Stinespring dilation triple of it is irreducible.
Since $((\rho,K_1,V),(\Psi,K_2,W))$ is a minimal dilation pair for
$(\varphi,\varphi),$ $\rho$ is an irreducible $*$-representation and
$[\Psi(\mathcal{E})K_1]=K_2,$ therefore $\Psi$ is an irreducible
representation of $\mathcal{E}$ by \cite[Proposition
3.6]{Arambašic}. Conversely, If $((\rho,K_1,V),(\Psi,K_2,W))$ is a
dilation pair for $(\varphi,\varphi)$ such that $\Psi$ is
irreducible, then \cite[Lemma 3.5]{Arambašic} implies that $\rho$ is
an irreducible $*$-representation for $\mathcal{A}.$ Thus
$(\rho,K_1,V)$ is a minimal Stinespring dilation triple for
$\varphi,$ and $((\rho,K_1,V),(\Psi,K_2,W))$ is a minimal dilation
pair for $(\varphi,\varphi).$
  Therefore $\varphi$ is pure by  \cite[Corollary 1.4.3]{Arv}.
\end{proof}

\section{A RADON-NIKODYM-TYPE THEOREM FOR COMPLETELY SEMI-$\varphi$-MAPS}

We denote the set of all pairs $(\varphi,\Phi)$, where
$\varphi:\mathcal{A}\rightarrow\mathcal{B}(H_1)$ is a completely
positive map and   $\Phi:\mathcal{F}\to\mathcal{B}(H_{1}, H_{2})$
is a completely semi-$\varphi$-map, by $\mathcal{CPE(F},H_{1},
H_{2})$.

Let $(\varphi,\Phi)\in\mathcal{CPE}(\mathcal{F}, H_{1}, H_{2}).$
Assume $((\rho, K_{1}, V),(\Psi, K_{2}, W))$ is a minimal
 dilation pair for $(\varphi,\Phi).$
Then $\rho$ is unital and by Remark \ref{r2.1} there
exists a $*$-homomorphism
 $\sigma:\mathbb{K}_{1}(\mathcal{F})\rightarrow\mathcal{B}(K_{2})$
  such that $\sigma(x\otimes y)=\Psi(x)\Psi(y)^*$ and moreover
  $$\begin{bmatrix}
\sigma & \Psi\\
\Psi^*  & \rho
\end{bmatrix}:\mathcal{L}_{1}(\mathcal{F})\rightarrow\mathcal{B}(K_{2}\oplus K_{1})
, \ \
\begin{bmatrix}
u & x\\
y^*  & a
\end{bmatrix}\mapsto \begin{bmatrix}
\sigma(u) & \Psi(x)\\
\Psi(y)^*  & \rho(a)
\end{bmatrix}.$$
is a $*$-representation.

The range of $\begin{bmatrix}
\sigma & \Psi\\
\Psi^*  & \rho
\end{bmatrix}$ is a $C^*$-subalgebra of $\mathcal{B}(K_{2}\oplus K_{1})$
 and it is easy to check that its commutant is the set of all $\begin{bmatrix}
P & 0\\
0  & Q
\end{bmatrix}\in\mathcal{B}(K_{2}\oplus K_{1})$ such that
\begin{equation}P\Psi(x)=\Psi(x)Q \hspace{1cm},\hspace{1cm}Q\Psi(x)^*=\Psi(x)^*P,\end{equation}
\begin{equation}\sigma(u)P=P\sigma(u)\hspace{15mm},\hspace{1cm}\rho(a)Q=Q\rho(a)\end{equation}
 for all $x\in\mathcal{F},$ $a\in \mathcal{A}$ and $u \in\mathbb{K}_{1}(\mathcal{F}).$

If $\mathcal{F}$ is full, then (1) implies (2).
  The above discussion lead us to
  the following definition \cite[Definition 4.1]{Arambašic}.


\begin{definition}\label{d5.1}
Let $\rho:\mathcal{A}\rightarrow B(H_{1})$ be a unital
$*$-representation
 and $\Psi:\mathcal{F}\rightarrow\mathcal{B}(H_{1}, H_{2})$ a $\rho$-map.
Commutant of $\Psi$ is the set of all operators $\begin{bmatrix}
P & 0\\
0  & Q
\end{bmatrix}\in\mathcal{B}(H_{2}\oplus H_{1})$
 such that the following equations hold for all $x\in\mathcal{F}$
$$P\Psi(x)=\Psi(x)Q \hspace{1cm},\hspace{1cm}Q\Psi(x)^*=\Psi(x)^*P,$$
and is denoted by $\Psi(\mathcal{F})'.$
\end{definition}

\begin{remark}\label{remark5.2}
 Assume $\Psi$ and $\rho$ as in the Definition \ref{d5.1}. Then $\Psi(\mathcal{F})'$ is a $C^*$-algebra,
moreover $(\begin{bmatrix}
\sigma & \Psi \\
\Psi^* & \rho
\end{bmatrix}(\mathcal{L}(\mathcal{F})))'\subseteq\Psi(\mathcal{F})'$.
In the case of full Hilbert $C^*$-modules if $\Psi$ is non-degenerate
 $([\Psi(\mathcal{F})H_1]=H_2),$ then  $[\Psi(\mathcal{F})^*H_2]=H_1$ and
$(\begin{bmatrix}
\sigma & \Psi \\
\Psi^* & \rho
\end{bmatrix}(\mathcal{L}(\mathcal{F})))'=\Psi(\mathcal{E})'$ \cite[Lemma 4.3 and Lemma 4.4]{Arambašic}.
\end{remark}

From now on we deal with full Hilbert $C^*$-modules.
 In the following we define an order relation on $\mathcal{CPE(F},H,K)$
and prove a Radon-Nikodym type theorem for this class of maps.

\begin{definition}\label{d5.2}
Let $(\varphi_i,\Phi_i)\in\mathcal{CPE(F},H,K)$ for $i=1,2.$
 We say that $(\varphi_1,\Phi_1)\ll(\varphi_2,\Phi_2)$ when
$$\begin{bmatrix}
id & \Phi_1\\
\Phi_1^*  & \varphi_1
\end{bmatrix}\leq_{cp}\begin{bmatrix}
id & \Phi_2\\
\Phi_2^*  & \varphi_2
\end{bmatrix},$$
where $\leq_{cp}$ is the order on the set of completely positive maps from
 $S_\mathcal{A}(\mathcal{F})$ into $\mathcal{B}(K\oplus H).$
\end{definition}

We use the notation $T\oplus S$ instead of $\begin{bmatrix}
T & 0\\
0  & S
\end{bmatrix}$ for operators $T\in\mathcal{B}(H)$ and $S\in\mathcal{B}(K).$
Note that $T\oplus S$ is a positive operator on
$H\oplus K$ if and only if $T\in\mathcal{B}(H)_{+}$
 and $S\in\mathcal{B}(K)_{+}.$ The following proposition is similar to
  \cite[Lemma 2.10]{J} on $\varphi$-maps,
  and we show that the lemma is true for completely semi-$\varphi$-maps, too.

 \begin{proposition}\label{p5.2}
 Assume $(\varphi,\Phi)\in\mathcal{CPE}(\mathcal{E},H_{1},H_{2})$ and
   $((\rho,K_{1},V),(\Psi,K_{2},W))$
   is a minimal dilation pair of $(\varphi,\Phi).$ For every  positive operator
    $T\oplus S\in\Psi(\mathcal{E})'$
     define the map
     $\Phi_{T\oplus S}:\mathcal{E}\rightarrow\mathcal{B}(H_{1},H_{2})$  by
 $$\Phi_{T\oplus S}(x):=W^*T^{\frac{1}{2}}\Psi(x)S^{\frac{1}{2}}V$$
for all $x\in\mathcal{E}.$ Then $(\varphi_S,\Phi_{T\oplus S})$
 is a CP-extendable pair, where  $\varphi_{S}(a)=V^*S\rho(a)V$, for each $a \in \mathcal{A}$.
  Moreover, if $T$ is contractive, $\Phi_{T\oplus S}$ is a completely semi-$\varphi_S$-map.
\end{proposition}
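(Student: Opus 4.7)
The plan is to exhibit an explicit block-wise completely positive extension of $\Phi_{T\oplus S}$ to the linking algebra $\mathcal{L}_1(\mathcal{E})$, and then to upgrade this to the completely semi-$\varphi_S$ property via a positivity comparison under the contractivity hypothesis on $T$.

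First I would set $\pi:=\begin{bmatrix}\sigma & \Psi\\ \Psi^* & \rho\end{bmatrix}$, the $*$-representation of $\mathcal{L}_1(\mathcal{E})$ supplied by Remark \ref{r2.1}. Since $\mathcal{E}$ is full and $\Psi$ is nondegenerate (by minimality of the dilation pair), Remark \ref{remark5.2} gives $\pi(\mathcal{L}(\mathcal{E}))'=\Psi(\mathcal{E})'$; because the added identity of $\mathcal{L}_1(\mathcal{E})$ is central under $\pi$, this commutant equals $\pi(\mathcal{L}_1(\mathcal{E}))'$ as well. In particular, $T\oplus S$ commutes with every $\pi(X)$, so $T$ commutes with $\sigma$, $S$ commutes with $\rho$, and $(T\oplus S)^{1/2}$ commutes with $\pi$.

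Next, I would define $\tilde{W}:=(T^{1/2}W)\oplus(S^{1/2}V)\in\mathcal{B}(H_2\oplus H_1,K_2\oplus K_1)$ and $\theta(X):=\tilde{W}^*\pi(X)\tilde{W}$. As a compression of a $*$-representation, $\theta$ is completely positive on $\mathcal{L}_1(\mathcal{E})$. Using the commutation above, a direct block computation yields
$$\theta\!\left(\begin{bmatrix}u & x\\ y^* & a\end{bmatrix}\right)=\begin{bmatrix}W^*T^{1/2}\sigma(u)T^{1/2}W & W^*T^{1/2}\Psi(x)S^{1/2}V\\ V^*S^{1/2}\Psi(y)^*T^{1/2}W & V^*S\rho(a)V\end{bmatrix},$$
whose off-diagonal corners are $\Phi_{T\oplus S}(x)$ and $\Phi_{T\oplus S}(y)^*$ and whose $(2,2)$-corner equals $\varphi_S(a)$. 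Setting $\psi_T(u):=W^*T^{1/2}\sigma(u)T^{1/2}W$, we therefore have $\theta=\begin{bmatrix}\psi_T & \Phi_{T\oplus S}\\ \Phi_{T\oplus S}^* & \varphi_S\end{bmatrix}$, proving that $(\varphi_S,\Phi_{T\oplus S})$ is a CP-extendable pair.

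Assume now $\|T\|\le 1$. By Lemma \ref{l3.1} it suffices to prove that the block map $\begin{bmatrix}id & \Phi_{T\oplus S}\\ \Phi_{T\oplus S}^* & \varphi_S\end{bmatrix}$ is completely positive on $S_{\mathcal{A}}(\mathcal{E})$. For a positive $X=\bigl(\begin{bmatrix}\lambda_{ij} & x_{ij}\\ y_{ij}^* & a_{ij}\end{bmatrix}\bigr)_{i,j=1}^{n}\in M_n(S_\mathcal{A}(\mathcal{E}))$, the scalar matrix $(\lambda_{ij})$ is positive (as observed in the proof of Lemma \ref{l3.1}). The images of $X$ under the desired block map and under $\theta^{(n)}$ differ only in the $(1,1)$-block, by the matrix $(\lambda_{ij}(I_{H_2}-W^*TW))$, which is positive because $(\lambda_{ij})\ge 0$ and $W^*TW\le W^*W=I_{H_2}$ (as $W$ is isometric and $\|T\|\le 1$). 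Since $\theta^{(n)}(X)\ge 0$, the image of $X$ under the desired map is positive as well. The only delicate step is the commutation of $T\oplus S$ with the whole of $\pi$---needed to identify the $(2,2)$-corner $V^*S^{1/2}\rho(a)S^{1/2}V$ with $V^*S\rho(a)V=\varphi_S(a)$---for which fullness of $\mathcal{E}$ and Remark \ref{remark5.2} are essential.
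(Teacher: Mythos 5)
Your argument for CP-extendability is essentially the paper's own: both compress the $*$-representation $\begin{bmatrix}\sigma & \Psi\\ \Psi^* & \rho\end{bmatrix}$ by $\begin{bmatrix}T^{1/2}W & 0\\ 0 & S^{1/2}V\end{bmatrix}$ and use the identification $\Psi(\mathcal{E})'=\pi(\mathcal{L}(\mathcal{E}))'$ (fullness plus nondegeneracy of $\Psi$, Remark \ref{remark5.2}) to commute $T^{1/2}$ past $\sigma$ and $S^{1/2}$ past $\rho$ and so recognize the corners as $\Phi_{T\oplus S}$ and $\varphi_S$. For the ``moreover'' part the paper gives no argument (it is declared ``easy to check''), so your positivity comparison via Lemma \ref{l3.1} is a genuine completion of that step, and its structure is sound: the desired block map differs from $\theta^{(n)}$ only by the matrix $\bigl(\lambda_{ij}(I_{H_2}-W^*TW)\bigr)$, which is positive whenever $(\lambda_{ij})\geq 0$ and $W^*TW\leq I_{H_2}$.

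The one flaw is your justification of $W^*TW\leq I_{H_2}$: you assert that $W$ is an isometry, but Definition \ref{d4.3} only requires $W$ to be a bounded operator, and for a \emph{minimal} dilation pair all that is guaranteed (via the construction preceding Theorem \ref{t4.4} and Remark \ref{r4.5}) is that $W$ is a contraction. Isometry can fail. Fortunately contractivity is all you need: $W^*TW\leq \|T\|\,W^*W\leq I_{H_2}$ when $\|T\|\leq 1$ and $\|W\|\leq 1$, so your comparison argument survives with that one-line correction (you should cite Remark \ref{r4.5} to get $\|W\|\leq 1$). Equivalently, one can check the completely semi-$\varphi_S$ property directly: using $T\Psi_n(x)=\Psi_n(x)S$ one gets
$$\Phi_{T\oplus S,n}(x)^*\Phi_{T\oplus S,n}(x)\leq \|T^{1/2}W\|^{2}\,V_n^*S_n^{1/2}\rho_n(\langle x,x\rangle)S_n^{1/2}V_n=\|T^{1/2}W\|^{2}\,\varphi_{S,n}(\langle x,x\rangle),$$
and $\|T^{1/2}W\|\leq 1$ again uses contractivity of both $T$ and $W$.
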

\begin{proof}
Since $((\rho,K_{1},V),(\Psi,K_{2},W))$
 is a minimal dilation for $(\varphi,\Phi),$ there exists
 a  non-degenerate (and therefore unital) $*$-homomorphism
  $\sigma:\mathbb{K}_{1}(\mathcal{E})\rightarrow\mathcal{B}(K_{2})$ such that
$$\begin{bmatrix}
\sigma & \Psi\\
\Psi^*  & \rho
\end{bmatrix}:\mathcal{L}_{1}(\mathcal{E})\rightarrow\mathcal{B}(K_{2}\oplus K_{1})$$
is a  $*$-homomorphism, so
$$\begin{bmatrix}
W^*T^{\frac{1}{2}} & 0 \\
0 & V^*S^{\frac{1}{2}}
\end{bmatrix}\begin{bmatrix}
\sigma & \Psi\\
\Psi^*  & \rho
\end{bmatrix}\begin{bmatrix}
T^{\frac{1}{2}}W & 0 \\
0 & S^{\frac{1}{2}}V
\end{bmatrix}:\mathcal{L}_{1}(\mathcal{E})\rightarrow\mathcal{B}(H_{2}\oplus H_{1})$$
is a completely positive map.
 By the discussion previous the
Definition \ref{d5.1}, $T\in\sigma(\mathbb{K}_{1}(\mathcal{E}))'$
and $S\in\rho(\mathcal{A})'$ thus
$S^{\frac{1}{2}}\in\rho(\mathcal{A})'$ and
$T^{\frac{1}{2}}\in\sigma(\mathbb{K}_{1}(\mathcal{E}))',$
 therefore
 $$\begin{bmatrix}
W^*T^{\frac{1}{2}}\sigma(.) T^{\frac{1}{2}}W & W^*T^{\frac{1}{2}}\Psi(.) S^{\frac{1}{2}}V\\
V^*S^{\frac{1}{2}}\Psi(.)^*T^{\frac{1}{2}}W  & V^*S\rho(.) V
\end{bmatrix}:\mathcal{L}_1(\mathcal{E})\rightarrow\mathcal{B}(H_{2}\oplus H_{1})$$
 is a completely positive map, thus
 $\Phi_{T\oplus S}$ is a CP-extendable map and $(\varphi_S,\Phi_{T\oplus S})$ is a CP-extendable pair.

 It is easy to check that $\Phi_{T\oplus S}$ is a completely semi-$\varphi_S$-map when $T$ is contractive.
\end{proof}

The above proposition has a converse that is
 a Radon-Nikodym type theorem for completely semi-$\varphi$-maps.

 \begin{theorem}\label{t5.4}
Let $\mathcal{E}$ be a full Hilbert $C^*$-module over a unital
$C^*$-algebra $\mathcal{A}.$ Assume \linebreak{
$(\varphi_1,\Phi_1),(\varphi_2,\Phi_2)\in\mathcal{CPE(E,}H_1,H_2)$}
and $(\varphi_1,\Phi_1)\ll(\varphi_2,\Phi_2).$
 Then, there exists a dilation pair
$((\rho,K_{1},W'),(\Psi,K_{2},W))$ for $(\varphi_2,\Phi_2)$ such that $\Psi$ is non-degenerate
 and a unique  positive contraction $T\oplus S\in\Psi(\mathcal{E})'$
  such that
  \begin{equation}\Phi_1(e)=W^*T^{\frac{1}{2}}\Psi(e)S^{\frac{1}{2}}W'\end{equation}
  for all $a\in\mathcal{A}$ and $e\in\mathcal{E}.$
\end{theorem}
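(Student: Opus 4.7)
The plan is to view both CP-extendable pairs as CP maps on the operator system $S_{\mathcal{A}}(\mathcal{E})$ and then invoke a Radon-Nikodym argument against a fixed minimal dilation of the dominating pair. By Lemma~\ref{l3.1}, the maps
\[
\theta_i^{(0)} := \begin{bmatrix} id & \Phi_i \\ \Phi_i^* & \varphi_i \end{bmatrix} : S_{\mathcal{A}}(\mathcal{E}) \to \mathcal{B}(H_2 \oplus H_1), \quad i=1,2,
\]
are CP, and the hypothesis $(\varphi_1,\Phi_1) \ll (\varphi_2,\Phi_2)$ says precisely $\theta_1^{(0)} \leq_{cp} \theta_2^{(0)}$. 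Using Theorem~\ref{t3.3} and the construction preceding Theorem~\ref{t4.4}, fix a minimal dilation pair $((\rho, K_1, W'), (\Psi, K_2, W))$ for $(\varphi_2, \Phi_2)$ with $\Psi$ non-degenerate and $W$ an isometry. Set $W'' := W \oplus W'$ and $\pi := \begin{bmatrix} \sigma & \Psi \\ \Psi^* & \rho \end{bmatrix}$, which by Remark~\ref{r2.1} is a unital $*$-representation of $\mathcal{L}_1(\mathcal{E})$; then $W''^* \pi(\cdot) W''$ is a CP extension of $\theta_2^{(0)}$ to $\mathcal{L}_1(\mathcal{E})$.

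Next, I would verify that $(\pi|_{S_{\mathcal{A}}(\mathcal{E})}, W'')$ is a minimal Stinespring-type dilation of $\theta_2^{(0)}$ in the operator-system sense, namely $[\pi(S_{\mathcal{A}}(\mathcal{E})) W''(H_2 \oplus H_1)] = K_2 \oplus K_1$. The $K_1$-component follows from $[\rho(\mathcal{A}) W' H_1] = K_1$, and the $K_2$-component from the $\rho$-module identity $\Psi(x)\rho(a) = \Psi(xa)$ together with $[\Psi(\mathcal{E}) K_1] = K_2$ (non-degeneracy of $\Psi$).

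I would then apply the operator-system form of Arveson's Radon-Nikodym theorem (an adaptation of \cite[Theorem 7.5]{Paulsen} that goes through here because $\pi$ is the restriction of a $*$-representation, so $\pi(s_1)^* \pi(s_2) = \pi(s_1^* s_2)$ supports the standard sesquilinear form construction) to produce a unique positive contraction $R \in \pi(S_{\mathcal{A}}(\mathcal{E}))'$ with $\theta_1^{(0)}(s) = W''^* R \pi(s) W''$ for every $s \in S_{\mathcal{A}}(\mathcal{E})$. Since products of elements of $S_{\mathcal{A}}(\mathcal{E})$ span a dense $\ast$-subalgebra of $\mathcal{L}_1(\mathcal{E})$ (using $xy^* \in \mathbb{K}(\mathcal{E})$ and $I \in \mathbb{K}_1(\mathcal{E})$), we have $\pi(S_{\mathcal{A}}(\mathcal{E}))' = \pi(\mathcal{L}_1(\mathcal{E}))'$; fullness of $\mathcal{E}$ and non-degeneracy of $\Psi$ together with Remark~\ref{remark5.2} identify this commutant with $\Psi(\mathcal{E})'$. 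Hence $R = T \oplus S$ for unique positive contractions $T \in \mathcal{B}(K_2)$ and $S \in \mathcal{B}(K_1)$ satisfying the intertwining relations of Definition~\ref{d5.1}.

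Finally, since $R$ lies in the von Neumann algebra $\pi(\mathcal{L}_1(\mathcal{E}))'$, Borel functional calculus places $R^{1/2} = T^{1/2} \oplus S^{1/2}$ in the same commutant, so $W''^* R \pi(s) W'' = W''^* R^{1/2} \pi(s) R^{1/2} W''$. Evaluating at $s = \begin{bmatrix} 0 & e \\ 0 & 0 \end{bmatrix}$ and comparing the $(1,2)$-blocks of both sides yields $\Phi_1(e) = W^* T^{1/2} \Psi(e) S^{1/2} W'$, and uniqueness of $T \oplus S$ is inherited from that of $R$. The main obstacle I foresee is the operator-system Radon-Nikodym step: one must produce $R$ in the commutant of $\pi|_{S_{\mathcal{A}}(\mathcal{E})}$ rather than merely as some contraction realizing the inequality, and then invoke the generation-and-fullness argument to transfer $R$ into $\Psi(\mathcal{E})'$; once that is in place, the extraction of the square-root formula is routine block-matrix bookkeeping.
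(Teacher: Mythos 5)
Your overall skeleton (encode the two pairs as completely positive maps on $S_{\mathcal{A}}(\mathcal{E})$ via Lemma \ref{l3.1}, dilate, run a Radon--Nikodym argument, finish with the commutant/square-root bookkeeping) is the same as the paper's, but the step you yourself flag as the main obstacle is a genuine gap, not a routine adaptation. There is no ready-made ``operator-system form'' of Arveson's Radon--Nikodym theorem to invoke, and the justification you offer --- that $\pi(s_1)^*\pi(s_2)=\pi(s_1^*s_2)$ because $\pi$ is (the restriction of) a $*$-representation --- concerns the wrong map: the standard sesquilinear-form construction on $[\pi(S_{\mathcal{A}}(\mathcal{E}))W''(H_2\oplus H_1)]$ requires the values of $\theta_1^{(0)}$ (not of $\pi$) at products $t^*s$, and these leave the operator system; for instance $s=\left[\begin{smallmatrix}0&0\\ x^*&0\end{smallmatrix}\right]$, $t=\left[\begin{smallmatrix}0&0\\ y^*&0\end{smallmatrix}\right]$ give $t^*s=\left[\begin{smallmatrix}yx^*&0\\ 0&0\end{smallmatrix}\right]$, whose $11$-corner is a general compact operator rather than a scalar, so $\theta_1^{(0)}(t^*s)$ is simply undefined and the positivity/boundedness of the form cannot be extracted from $\theta_1^{(0)}\leq_{cp}\theta_2^{(0)}$ alone. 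The paper closes exactly this hole with Arveson's extension theorem: it extends $\psi_1$ and $\psi_2-\psi_1$ to completely positive maps on $\mathcal{L}_1(\mathcal{E})$, sets $\widetilde{\psi_2}:=\widetilde{\psi_1}+\widetilde{\psi_2-\psi_1}$, takes the minimal Stinespring dilation of $\widetilde{\psi_2}$, and only then applies the $C^*$-algebraic Radon--Nikodym theorem \cite[Theorem 1.4.2]{Arv}.

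This also explains why your framing --- fixing a minimal dilation pair of $(\varphi_2,\Phi_2)$ in advance --- proves more than the statement and cannot be rescued by the same patch: Arveson extension yields domination $\widetilde{\psi_1}\leq_{cp}\widetilde{\psi_2}$ only for the particular extension $\widetilde{\psi_2}$ built in the process, which on $\mathcal{L}_1(\mathcal{E})$ need not coincide with the compression $W''^*\pi(\cdot)W''$ attached to your pre-chosen pair; the dilation pair in the theorem is the one produced by the construction (its $\Psi$ is non-degenerate because the minimal Stinespring representation is non-degenerate and $\mathcal{E}$ is full), and it need not satisfy condition (i) of Definition \ref{d4.3}, so Theorem \ref{t4.4} cannot be used to transport the derivative to your fixed pair. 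Relatedly, uniqueness is not secured by your minimality check: knowing $W''^*(R-R')\pi(s)W''=0$ only for $s\in S_{\mathcal{A}}(\mathcal{E})$ is not enough for the usual density argument, which again needs the identity at products $t^*s$, i.e.\ on $\mathcal{L}_1(\mathcal{E})$; in the paper uniqueness is inherited from Arveson's theorem applied to the extended maps. Your verification that $[\pi(S_{\mathcal{A}}(\mathcal{E}))W''(H_2\oplus H_1)]=K_2\oplus K_1$, the identification $\pi(S_{\mathcal{A}}(\mathcal{E}))'=\pi(\mathcal{L}_1(\mathcal{E}))'=\Psi(\mathcal{E})'$, and the final block computation with $R^{1/2}=T^{1/2}\oplus S^{1/2}$ are correct and agree with how the paper concludes, but they rest on the unproved Radon--Nikodym step.
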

\begin{proof}
Assume $(\varphi_1,\Phi_1)\ll(\varphi_2,\Phi_2),$ thus,
 $\begin{bmatrix}
id & \Phi_1 \\
\Phi_1^*  & \varphi_1
\end{bmatrix}\leq_{cp}
\begin{bmatrix}
id & \Phi_2 \\
\Phi_2^*  & \varphi_2
\end{bmatrix}$. Put $\psi_1:=\begin{bmatrix}
id & \Phi_1 \\
\Phi_1^*  & \varphi_1
\end{bmatrix}$ and $\psi_2:=\begin{bmatrix}
id & \Phi_2 \\
\Phi_2^*  & \varphi_2
\end{bmatrix}.$
Thus $\psi_2 - \psi_1$ is a completely positive map. By Arveson's
extension theorem, $ \psi_2 - \psi_1$ and $   \psi_1$ have
completely positive extensions $\widetilde{\psi_2 -\psi_1}$ and
$\widetilde{  \psi_1}$  on $\mathcal{L}_{1}(\mathcal{E})$. Let
$\widetilde{  \psi_2} :=\widetilde{  \psi_1} + \widetilde{\psi_2 -
\psi_1}$. We have $\widetilde{  \psi_1} \leq_{cb} \widetilde{
\psi_2} $.
 Thus $\widetilde{  \psi_2}$  is a completely positive
extension for $\psi_2$ such that $\widetilde{  \psi_1} \leq_{cb}
\widetilde{  \psi_2}$.
 Assume $(\pi,K,V)$ is the minimal Stinespring dilation triple for $\widetilde{\psi_{2}}$,
 then similar to the proof of Theorem \ref{t3.2}, $K$ decomposes to
  $K_{2}\oplus K_{1}$ and there exist unital $*$-homomorphisms
  $\sigma:\mathbb{K}_{1}(\mathcal{E})\rightarrow\mathcal{B}(K_{2})$
    and $\rho:\mathcal{A}\rightarrow\mathcal{B}(K_{1})$ and also a
    $\sigma$-$\rho$-representation  $\Psi:\mathcal{E}\rightarrow\mathcal{B}(K_{1},K_{2})$
such that $\pi=\begin{bmatrix}
\sigma & \Psi \\
\Psi^* & \rho
\end{bmatrix}$ and $V=W\oplus W'\in\mathcal{B}(H_{2}\oplus H_{1},K_{2}\oplus K_{1})$.
 Therefore $((\rho,K_1,W'),(\Psi,K_2,W))$ is a dilation pair for $(\varphi_2,\Phi_2)$ and $\widetilde{\psi_{1}}\leq_{cp}\begin{bmatrix}
W^* & 0 \\
o  & W'^*
\end{bmatrix}\begin{bmatrix}
\sigma & \Psi \\
\Psi^*  & \rho
\end{bmatrix}\begin{bmatrix}
W & 0 \\
0  & W'
\end{bmatrix}.$
Note that $\pi$ is non-degenerate and by the assumption $\mathcal{E}$ is full,
 therefore $\Psi$ is non-degenerate and $\Psi(\mathcal{E})'=\pi(\mathcal{L}_1(\mathcal{E}))'$ by Remark \ref{remark5.2} and Remark \ref{r2.1}.
 Thus by \cite[Theorem 1.4.2]{Arv},
 there is a unique  $T\oplus S\in\Psi(\mathcal{E})'$ such that
  $0\leq T\oplus S\leq id_{K_{2}\oplus K_{1}}$
   and $$\widetilde{\psi_{1}}=\begin{bmatrix}
W^* & 0 \\
0  & W'^*
\end{bmatrix}\begin{bmatrix}
T & 0 \\
0  & S
\end{bmatrix}\begin{bmatrix}
\sigma & \Psi \\
\Psi^*  & \rho
\end{bmatrix}\begin{bmatrix}
W & 0 \\
0  & W'
\end{bmatrix}=\linebreak\begin{bmatrix}
W^*T\sigma(.)W & W^*T\Psi(.)W' \\
W'^*S\Psi(.)^*W  & W'^*S\rho(.)W'
\end{bmatrix}.$$
Since $\widetilde{\psi_{1}}$ is an extension of $\begin{bmatrix}
id & \Phi_1 \\
\Phi_1^*  & \varphi_1
\end{bmatrix}$, one has $\Phi_1(x)=W^*T\Psi(x)W'$ and $\varphi_1(a)=W'^*S\rho(a)W'$ for all
 $x\in\mathcal{E}$ and $a\in\mathcal{A}.$ But note that $\Psi(\mathcal{E})'$ is a $C^*$-algebra,
  hence $T^{\frac{1}{2}}\oplus S^{\frac{1}{2}}=(T\oplus S)^{\frac{1}{2}}\in\Psi(\mathcal{E})',$
  thus $T\Psi(x)=T^{\frac{1}{2}}T^{\frac{1}{2}}\Psi(x)=T^{\frac{1}{2}}\Psi(x)S^{\frac{1}{2}}.$
  Then $\Phi_1(x)=W^*T^{\frac{1}{2}}\Psi(x)S^{\frac{1}{2}}W'.$

\end{proof}

\subsection*{Acknowledgment}
The research of the first author was in part supported by a grant
from IPM (No. 94470046).


\begin{thebibliography}{99}
%
%


\bibitem{Arambašic}  L. Aramba$\check{s}$i$\acute{c}$, {\it Irreducible representations of Hilbert C*-modules}, Math. Proc. R. Ir. Acad. {2} (2005),  11-24.

\bibitem{Arv} W. Arveson, {\it Subalgebras of $C^*$-algebras}, Acta Math. \textbf{123} (1969), 141-224.

\bibitem{Asadi} M. B. Asadi, {\it Stinespring's theorem for Hilbert $C^*$-modules}, J. Operator Theory \textbf{62} (2008), no. 2, 235-238.

\bibitem{BRS} B. V. R. Bhat, G. Ramesh and  K. Sumash, {\it Stinespring's theorem for maps on Hilbert $C^*$-modules}, J. Operator
Theory \textbf{68} (2012), 173-178.


\bibitem{Effros} E. G. Effros and Z.-J. Ruan,
Operator spaces, Lond. Math. Soc. Mono., New Series \textbf{23},
Oxford University Press, New York, 2000.

\bibitem{L} E. C. Lance, Hilbert $C^*$-modules, Cambridge University Press, 1995.

\bibitem{J} M. Joita, {\it Comparison of completely positive map on Hilbert $C^*$-modules}, J. Math.
Anal. Appl. \textbf{393} (2012), 644-650.

\bibitem{Paulsen} V. Paulsen Completely bounded maps and operator algebras, Cambridge Studies in advanced Mathematics, vol \textbf{78}, 2002.


\bibitem{skeide} M. Skeide, {\it Factorization of maps between Hilbert $C^*$-modules}, J.  Operator Theory \textbf{68} (2012), 543-547.

\bibitem{Skeide-Sumesh} M. Skeide, K. Sumesh, {\it CP-H-Extendable maps between Hilbert Modules and CPH-semigroups},
J. Math. Anal. Appl. \textbf{414} (2014), 886-913.

\bibitem{St} W. F. Stinespring, {\it Positive functions on $C^*$-algebras}, Proc. Amer. Math. Soc. \textbf{6} (1955), 211-216.


\end{thebibliography}
\end{document}